\documentclass[reqno]{amsart}
\usepackage[colorlinks=true,linktoc=page]{hyperref}
\usepackage{amsthm,amsmath,amssymb}
\usepackage[percent]{overpic}
\usepackage{color}
\usepackage{subcaption}
\usepackage{cite}
\usepackage{upgreek}
\usepackage{fullpage}
\usepackage{mathrsfs}
\usepackage{todonotes}

\makeindex

\usepackage{tgbonum}

\allowdisplaybreaks

\setcounter{tocdepth}{2}

\title[A supersonic shock interrupting the classical development for QLW]{A quasilinear wave with a supersonic shock in a weak solution interrupting the classical development} 
\author[LA]{Leonardo Abbrescia$^{a}$}
\author[PB]{Pieter Blue$^{b}$}
\author[JS]{Jan Sbierski$^{c}$}
\author[JS]{Jared Speck$^{d}$}

\thanks{$^{a}$School of Mathematics, Georgia Institute of Technology, GA, USA.
\texttt{abbrescia@math.gatech.edu}}

\thanks{$^{b}$ School of Mathematics and Maxwell Institute for Mathematical Sciences, The University of Edinburgh, Edinburgh, UK. \texttt{P.Blue@ed.ac.uk}}

\thanks{$^{c}$ School of Mathematics and Maxwell Institute for Mathematical Sciences, The University of Edinburgh, Edinburgh, UK. \texttt{Jan.Sbierski@ed.ac.uk}}

\thanks{$^{c}$ Department of Mathematics, Vanderbilt University, TN, USA. \texttt{jared.speck@vanderbilt.edu}}




\newcommand{\Lunit}{L}
\newcommand{\uLunit}{\underline{L}}
\newcommand{\gfour}{\mathbf{g}}
\newcommand{\gfourextend}{\mathbf{g}}


\newtheorem{theorem}{Theorem}
\newtheorem{lemma}[theorem]{Lemma}
\newtheorem{corollary}[theorem]{Corollary}

\theoremstyle{definition}
\newtheorem{definition}[theorem]{Definition}
\newtheorem{remark}[theorem]{Remark}

\numberwithin{theorem}{section}

\newcounter{mnotecount}[section]




\newcommand{\Reals}{\mathbb{R}}

\newcommand{\vecL}{L}
\newcommand{\vecLbar}{\underline{L}}
\newcommand{\p}{\partial}
\newcommand{\R}{\mathbb{R}}
\newcommand{\charcurveL}{\gamma}

\newcommand{\di}{\mathrm{d}}

\usepackage{tikz}


\numberwithin{theorem}{section}

\newcommand{\solu}{\Phi}
\newcommand{\soluWeak}{\solu_W}
\newcommand{\soluClassical}{\solu_C}
\newcommand{\soluDomainWeak}{\soluDomain_W}
\newcommand{\soluSDP}{\solu_{\textnormal{SDP}}}
\newcommand{\soluBurgersSDP}{\soluBurgers_{\textnormal{SDP}}}
\newcommand{\soluDomainClassical}{\soluDomain_C}
\newcommand{\soluDomainAgree}{\soluDomain_A}
\newcommand{\soluDomainSDP}{\soluDomain_{\textnormal{SDP}}}
\newcommand{\CauchyHorizon}{\underline{\mathcal{C}}}
\newcommand{\singularBoundary}{\mathcal{B}}
\newcommand{\initialSingularity}{\mathcal{S}}
\newcommand{\shockwave}{\mathcal{K}}


\newcommand{\boundary}{\partial}



\newcommand{\hsfcData}{\Sigma}

\newcommand{\soluDomain}{\Omega}






\newcommand{\soluBurgers}{\Psi}
\newcommand{\dataBurgers}{\Psi_0}

\newcommand{\tSingularBoundaryFromData}{\tilde{t}_\singularBoundary}
\newcommand{\xSingularBoundaryFromData}{\tilde{x}_\singularBoundary}
\newcommand{\xSingularBoundary}{x_\singularBoundary}
\newcommand{\xRegularBoundary}{x_{\CauchyHorizon}}
\newcommand{\BurgersDomainRight}{\Omega_1}
\newcommand{\BurgersDomainLeft}{\Omega_2}
\newcommand{\BurgersDomainClassicalOnly}{\Omega_3}
\newcommand{\soluBurgersWeak}{\soluBurgers_W}
\newcommand{\soluBurgersClassical}{\soluBurgers_C}




\usepackage[normalem]{ulem}

\begin{document}
\maketitle

\begin{abstract}

 We study the Cauchy problem for classical and weak shock-forming solutions to a model quasilinear wave equation in $1+1$ dimensions arising from a convenient choice of $C^{\infty}$ initial data, which allows us to solve the equation using elementary arguments. The simplicity of our model allows us to succinctly illustrate various phenomena of geometric and analytic significance tied to shocks, which we view as a prototype for phenomena that can occur in  more general quasilinear hyperbolic PDE solutions. Previously, these phenomena 
 had only been demonstrated in far more technical work tied to the relativistic Euler equations and the compressible Euler equations e.g. \cite{Christodoulou:shockFormation,Christodoulou:shockDevelopment,AbbresciaSpeck,shkoller2024geometry}. 
 
 Our Cauchy problem admits a classical solution that blows up in finite time. The classical solution is defined in a largest possible globally hyperbolic region called a maximal globally hyperbolic development (MGHD), and its properties are tied to the intrinsic Lorentzian geometry of the equation and solution. The boundary of the MGHD contains an initial singularity,  a singular boundary along which the solution's second derivatives blow up (the solution and its first derivatives remain bounded), and a Cauchy horizon. Our main results provide the first example of a provably unique MGHD for a shock-forming quasilinear wave equation solution; it is provably unique because its boundary has a favorable global structure that we precisely describe.
 We also prove that for \underline{the same $C^{\infty}$ initial data}, the Cauchy problem admits a second kind of solution: a unique global weak entropy solution that has a shock curve separating two smooth regions. Of particular interest is our proof that the classical and weak solutions agree before the shock but \underline{differ} in a region to the future of the first singularity where both solutions are defined. 
 Our model shares features with more realistic models from fluid mechanics that have been previous locally studied,
such as exhibiting the same rate of blowup at the initial singularity, exhibiting similar behavior near the singular boundary, and exhibiting the same discontinuity in the higher derivatives of the weak solution across the Cauchy horizon.

Finally, we place our work in context, highlight the advantages of the geometric approach to studying solutions, and present some important open problems.

\end{abstract}

\tableofcontents

\section{Introduction} \label{S:INTRO}
In this paper, we study the Cauchy problem for the quasilinear wave (QLW) equation 

\begin{subequations} \label{E:QNLW:CAUCHY:INTRO}
    \begin{align} 
        -\partial_t^2\solu
-(\partial_t\solu-2\partial_x\solu)\partial_t\partial_x\solu
+2(2 + (\partial_t\solu-2\partial_x\solu))\partial_x^2\solu
= 0, \label{E:QNLW:INTRO} \\
        \big(\solu(0,x), \p_t \solu(0,x)\big) = (0,-\arctan(x)),\label{E:QNLW:DATA:INTRO}
    \end{align}
\end{subequations}
posed on $\Reals_t\times \Reals_x$. Our model, which can be expressed as a system of hyperbolic PDEs with a genuinely nonlinear characteristic speed, was inspired heavily by the structure of the compressible Euler equations in an appropriate formulation.\footnote{The irrotational (zero vorticity) and isentropic (constant entropy) compressible Euler equations can be expressed as a quasilinear wave equation $h^{\alpha\beta}(\partial \phi) \partial_{\alpha\beta}^2\phi = 0$ for a fluid potential $\phi$ whose gradient is at the same level of regularity as the \emph{undifferentiated} solution variables such as the fluid velocity or the density. The fluid potential $\phi$ is the analogue of our  solution $\solu$ to \eqref{E:QNLW:CAUCHY:INTRO}.} We chose the initial data for \eqref{E:QNLW:DATA:INTRO} so that the genuinely nonlinear characteristics collapse with infinite density and cause $\solu$'s second derivatives to blow up while $\solu$ and its gradient remain bounded.\footnote{In the context of compressible Euler, some authors use the terms ``gradient catastrophes'' to describe the points where the velocity and density develop infinite gradients. Since $\solu$'s second derivatives blowup while $\solu$ and its gradient remain bounded, we call this the ``initial singularity''. These are distinct from the actual shocks describing discontinuous jumps in derivatives of $\solu$.} Roughly, the first main result of the paper is:
    \begin{quote}
        We construct the largest possible \emph{classical} solution to \eqref{E:QNLW:INTRO} that is \emph{uniquely} determined by the initial data \eqref{E:QNLW:DATA:INTRO}  up until the points where the second derivatives blow up.
    \end{quote}
See Theorem\,\ref{thm:main:MGHD} for a precise statement.
The notion of ``largest possible classical solution'' is fundamentally connected to the intrinsic Lorentzian geometry of the equation and to a corresponding object known as the Maximal Globally Hyperbolic Development (MGHD for short) of the data. We will introduce these notions in detail later on.
 Roughly, the second main result of the paper is: 
    \begin{quote}
        We construct the global \emph{weak} solution to \eqref{E:QNLW:INTRO}--\eqref{E:QNLW:DATA:INTRO} that is \emph{unique} within the class of entropy solutions.
    \end{quote}
See Theorem\,\ref{T:UNIQUEWEAKSOLUTIONS} for the precise statement. Importantly, Fig. \ref{F:BOTHSOLUTIONSINONEPICTURE} shows that  \emph{there are portions of spacetime where the classical and weak solutions described above disagree}. 

\begin{figure}[h]
    \begin{overpic}[scale=1.3, grid = false, tics=5, trim=-.5cm 0cm -1cm -.5cm, clip]{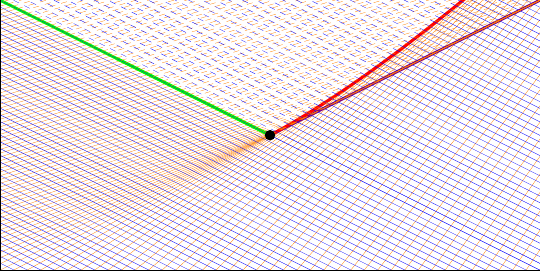}
        \put (3,42) {$t$}
        \put (92,-1) {$x$}
    \end{overpic}
    \caption{A figure of the bicharacteristics -- denoted by the orange and blue lines -- of equation \eqref{E:QNLW:INTRO}. The weak and classical solutions agree in the large region where the bicharacteristics are solid. The weak and classical solution \emph{disagree} in the wedged region where there is a family of both solid blue and dashed orange characteristics. This is due to the formation of a shock -- the purple line -- across which there are jumps only for the weak solution. The classical solution is inextendible (in a globally hyperbolic manner) to the region with only dashed bicharacteristics and only the weak solution is defined there. The rest of this paper is devoted to explaining this figure in full detail.}
    \label{F:BOTHSOLUTIONSINONEPICTURE}
\end{figure}

The uniqueness component of the first result described above is astonishingly subtle and is deeply coupled to the \underline{\emph{global}} behavior of the characteristics  of \eqref{E:QNLW:INTRO}--\eqref{E:QNLW:DATA:INTRO}. On the other hand, when reformulated in the language of \emph{Lorentzian geometric analysis}, the fundamental ideas are surprisingly easy to explain, and in $1D$, prove. This paper has two primary goals. One is to illustrate this geometric framework through the lens of the concrete model \eqref{E:QNLW:INTRO}--\eqref{E:QNLW:DATA:INTRO} to the vast community working in hyperbolic conservation laws and fluid mechanics. In particular, we hope to explain some of the foundational ideas from Christodoulou's seminal work on shock formation for the $3D$ relativistic Euler equations \cite{Christodoulou:shockFormation} -- which may be extremely difficult to parse for readers unfamiliar with general relativity --  in this current simple context. The other primary goal is to illustrate one type of shock behaviour to the general relativity and geometric wave equation communities.

We stress that for hyperbolic PDEs there are other forms of singularities that can develop from smooth initial conditions such as implosions. Although we do not address implosion singularities for our model problem, we highlight open problems in this direction Appendix  \ref{S:PROBLEMSANDPERSPECTIVE}.

After setting up some of the necessary geometric formalism in Sect.\,\ref{SS:INTRO:PRELIMINARYGEO}, we discuss the first main result concerning classical solutions in Sect.\,\ref{SS:MGHD}. We conclude the introduction with a discussion of the second main result concerning weak solutions in Sect.\,\ref{SS:GLOBALWEAK}.

\subsection{Preliminary geometric set up} \label{SS:INTRO:PRELIMINARYGEO}

The starting point of the discussion is to rewrite \eqref{E:QNLW:INTRO} in the form
\begin{align}
    (\gfour^{-1})^{\mu \nu} \partial_\mu \partial_\nu \Phi =0, \label{E:INTRO:GEOMETRICQNLW}
\end{align}
where, by setting 
    \begin{align}
        \Psi := \partial_t \Phi - 2 \partial_x \Phi, \label{E:DEFINITIONOFPSI}
    \end{align} the principal symbol $\gfour^{-1} = \begin{pmatrix}
    -1 & -\frac{1}{2}\Psi \\ -\frac{1}{2}\Psi & 2 (2 + \Psi)
\end{pmatrix}$ can be easily read off. For $\Psi \neq -4$ this matrix can be inverted to give
\begin{equation} \label{EqAcMetric}
     \gfour = \gfour(\soluBurgers) =  - \frac{8 (2+\soluBurgers)}{( 4+ \soluBurgers)^2} \, \mathrm{d}t^2 - \frac{2\soluBurgers}{(4 + \soluBurgers)^2} \, (\mathrm{d}t \otimes \mathrm{d}x + \mathrm{d}x \otimes \mathrm{d}t) + \frac{4}{(4+ \soluBurgers)^2} \, \mathrm{d}x^2 \;,
\end{equation}
which is a Lorentzian metric. In analogy with the reduction of the irrotational relativistic Euler equations to a quasilinear wave equation of the form \eqref{E:INTRO:GEOMETRICQNLW}, known since the 1930s \cite{SyngeRelHydro}, we will refer to $\gfour$ as the \textbf{acoustical metric}. The link between the method of characteristics of our  PDE\footnote{For a standard exposition on the method of characteristics for hyperbolic PDEs, see \cite{hormander2007analysisI}} and the acoustical metric $\gfour$ goes by the name of a $\gfour$-\emph{null frame}. 

\begin{definition}[The $\gfour$-null frame] \label{D:DOUBLENULLFRAME} We define $\Lunit$ and $\uLunit$ to be the vectorfields given by

\begin{align}
	\Lunit := \p_t + (2+\Psi)\p_x, & & \uLunit := \p_t - 2 \p_x. \label{E:DOUBLENULLFRAME}
\end{align}
 We call $\Lunit$ the \emph{outgoing characteristic vectorfield} and $\uLunit$ the \emph{ingoing characteristic vectorfield}. We refer to $\{\Lunit,\uLunit\}$ as the $\gfour$-null frame. We often refer to the integral curves of $\Lunit$ as the \emph{outgoing characteristics} and the integral curves of $\uLunit$ as the \emph{ingoing characteristics}.\footnote{The reader can make the analogy with $L^{(\textnormal{flat})}:= \p_t + \p_x$ and $\uLunit^{(\textnormal{flat})} := \p_t - \p_x$ for the linear wave equation $-\p_t^2 \Phi + \p_x^2 \Phi = 0$.}
 \end{definition}
 
 Direct calculations show that the Cartesian coordinate derivative vectorfields, as well as the inverse acoustical metric, may all be expressed relative to the characteristic vectorfields as

\begin{subequations}
	\begin{align}
		\p_x & = \frac{1}{4 + \Psi} (\Lunit -\uLunit), \label{E:PARTIALXINTERMSOFDOUBLENULLFRAME} \\
		\p_t & = \frac{2}{4+\Psi} \Lunit + \frac{2+\Psi}{4 + \Psi} \uLunit, \label{E:PARTIALTINTERMSOFDOUBLENULLFRAME} 
	\end{align}
\end{subequations}
	\begin{align} \label{E:INVERSEMETRICINTERMSOFDOUBLENULLFRAME}
		\gfour^{-1} = - \frac{1}{2} \Lunit \otimes \uLunit - \frac{1}{2} \uLunit \otimes \Lunit.
	\end{align}
The use of characteristic vectorfields to prove shock formation in hyperbolic PDE systems dates back to Fritz John's foundational work \cite{john1974formation}, though he did not view them through the Lorentzian point of view as in \eqref{E:INVERSEMETRICINTERMSOFDOUBLENULLFRAME}. We close this short section by noting that \eqref{E:INVERSEMETRICINTERMSOFDOUBLENULLFRAME} implies $\gfour(\Lunit,\Lunit) = \gfour(\uLunit, \uLunit) = 0$; namely, they are $\gfour$-null; see Appendix \ref{SecApp}. It is easy to see that the outgoing characteristics are the genuinely nonlinear ones.

\subsection{Maximal globally hyperbolic developments} \label{SS:MGHD}
\hfill

We now use the acoustic geometry to explain in more detail the first main result described above. The Lorentzian metric $\gfour = \gfour(\Psi) = \gfour(\p \Phi)$ given by \eqref{EqAcMetric} defines an acoustical \emph{causal structure} for solutions that
determines the notion of \textbf{globally hyperbolic development (GHD)}. A GHD $(\Omega, \Phi)$ consists of an open neighborhood $\Omega$ of $\{t=0\}\times \Reals_x$ and a smooth classical solution $\Phi: \Omega \to \Reals$ to \eqref{E:QNLW:INTRO}--\eqref{E:QNLW:DATA:INTRO} for which $(\Omega, \gfour)$ is a Lorentzian manifold with $\Omega \cap \{t=0\}$ as a Cauchy hypersurface. Roughly, this means that any point $p \in \Omega$ is in the domain of dependence of the data; 
the reader not familiar with these geometric notions is again referred to Appendix \ref{SecApp}. As a GHD describes a \emph{classical} solution of \eqref{E:QNLW:INTRO}--\eqref{E:QNLW:DATA:INTRO}, the aforementioned blow up of the second derivatives of $\solu$ occurs precisely on the \emph{boundary} of the GHD.  

A more precise statement of our first main result is that we construct the \textbf{\emph{\underline{unique} maximal\footnote{A GHD is \emph{maximal} if it is inextendible as a GHD.} globally hyperbolic development}} (MGHD) of the data up to its boundary. This is the first example of an MGHD in any spatial dimension for any system of PDEs which terminates due to the formation of a shock on its boundary. The boundary consists of the following connected components, see Fig.\,\ref{fig:MGHD_both_characteristics} for an illustration, and Theorem\,\ref{thm:main:MGHD} below for the precise result.

\begin{enumerate}
    \item[I)] An initial singularity $\initialSingularity$, which is the first point on which the second derivatives of $\solu$ blow up.
    \item[II)]
    A curve $\singularBoundary$, called the singular boundary\footnote{Christodoulou \cite{Christodoulou:shockFormation} refers to $\initialSingularity\cup\singularBoundary$ as the \emph{singular part of the boundary}.}, along which the outgoing characteristics collapse with infinite density.
    \item[III)] A curve $\CauchyHorizon$ to which the solution extends smoothly and which is null with respect to the acoustical metric. For this reason the curve is called a Cauchy horizon. 
\end{enumerate}

\begin{figure}[h]
    \begin{overpic}[scale=1, grid = false, tics=5, trim=-.5cm 0cm -1cm -.5cm, clip]{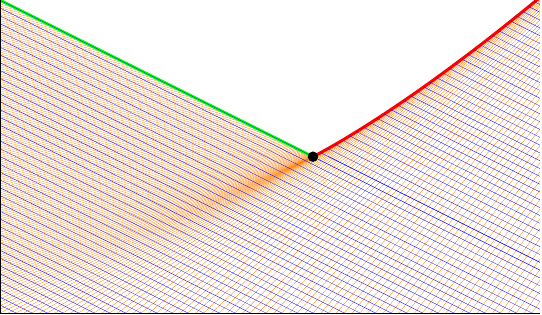}
    \end{overpic}
    \caption{An illustration of the domain and causal structure for the solutions constructed in the proof of Theorem \ref{thm:main:MGHD}. The integral curves of $\Lunit$ and $\uLunit$ are depicted in solid orange and blue, respectively. The singular boundary $\singularBoundary$ is illustrated in red and the Cauchy horizon $\CauchyHorizon$ is illustrated in green. Both emanate from the initial  singularity $\initialSingularity$, illustrated in black. The classical solution is defined to the right of the singular boundary $\singularBoundary$ and beneath the Cauchy horizon $\CauchyHorizon$.}
    \label{fig:MGHD_both_characteristics}
\end{figure}

\subsubsection{Historical context and the subtleties of unique MGHD's} \label{SSS:HISTORICALCONTEXTANDSUBTLETIESOFMGHDS}
The notion of MGHD's was first introduced in the context of General Relativity in Choquet-Bruhat--Geroch's historic work \cite{ChoquetBruhatGeroch}, which (roughly) states the following: as long as the solution of the initial value problem develops in a globally hyperbolic manner, the solution develops in a unique way. In particular, there is a unique  maximal GHD. Surprisingly, for quasilinear wave equations posed on a fixed background, global hyperbolicity of the development no longer guarantees that the solution develops in a unique manner -- and thus, in general,  the \emph{uniqueness} of MGHD's is also no longer guaranteed. The recent work \cite{EperonReallSbierski} involving the third author  provided an example of a quasilinear wave equation $h^{\alpha\beta}(\partial\Phi)\partial_{\alpha\beta}^2\Phi = 0$ and \emph{two} MGHD's $(\Omega_1,\Phi_1)$ and $(\Omega_2,\Phi_2)$ \underline{of the same initial data}, for which there are $p \in \Omega_1 \cap \Omega_2$ such that $\Phi_1(p) \neq \Phi_2(p)$. On the other hand, \cite{EperonReallSbierski} provided a \emph{global} geometric condition where, if satisfied by an MGHD, then that MGHD would be \emph{the} unique MGHD of the data. This result is teleological in the following sense: one must construct the MGHD \emph{in its entirety} to guarantee the geometric condition, which requires that the MGHD must only lie on one side of its boundary, holds.

A \emph{local} version of the global behavior described in {I)}--{II)} was first described in Christodoulou's breakthrough monograph on the formation of shocks for the relativistic Euler equations \cite{Christodoulou:shockFormation}. For small irrotational (zero vorticity) isentropic (constant entropy) symmetry breaking perturbations of a constant state, in $3D$, he gave a sharp continuation criterion which states that either a shock forms in finite time or the solution is smooth. He then constructed a large class of shock forming data and followed the solution until the first time of blow up.\footnote{The first to follow the classical evolution of a QLW until a shock forms in multiple dimensions without symmetry was Alinhac \cite{sA1999a,sA1999b}. In those works, Alinhac used Nash--Moser to overcome a loss of derivatives. However, this prevented him from analyzing the solution \emph{past} the time of first blowup to obtain the rest of the MGHD.} For a more restrictive set of shock-forming data satisfying a strict non-degeneracy assumption, he derived a local and implicit portion of the MGHD past the initial singularity up to the singular boundary $\singularBoundary$, now being co-dimension 1 hypersurfaces of Minkowski space $\Reals^{1+3}$. We refer the reader to Chapter 15 of \cite{Christodoulou:shockFormation}. Since only a small portion of the boundary of the MGHD were constructed, the uniqueness of the classical time evolution cannot be inferred from the teleological uniqueness result of MGHDs given in \cite{EperonReallSbierski}.\footnote{The authors of \cite{EperonReallSbierski} were under the mistaken impression that the full MGHD and its boundary had been constructed in \cite{Christodoulou:shockFormation}, cf.\ the comment after \cite[Lemma 4.86]{EperonReallSbierski}. } 

Since the release of \cite{Christodoulou:shockFormation}, Christodoulou's research program jump-started a remarkable surge of progress in the multi-dimensional analysis of shock formation \cite{speck2016shock,speck2016stable,miao2017formation,speck2018shock,luk2018shock,speck2019multidimensional, luk2024stability,AbbresciaSpeck,AbbresciaSpeck2,shkoller2024geometry}. We highlight the recent results of the first and fourth authors \cite{AbbresciaSpeck}. This work gave an explicit construction of a connected portion of $\singularBoundary$ for the $3D$ non-relativistic compressible Euler equations with dynamic vorticity and entropy. The class of shock-forming data \cite{AbbresciaSpeck} removes the ``strict'' non-degeneracy assumption assumed in Chapter 15 of \cite{Christodoulou:shockFormation} and allows for a \emph{full neighborhood} of plane symmetric solutions forming a non-degenerate shock. We also highlight the recent work of Shkoller-Vicol \cite{shkoller2024geometry}, which constructs \emph{both} the singular boundary and the Cauchy horizon for the $2D$ non-relativistic compressible Euler equations with dynamic vorticity but constant entropy. For an open set of strictly non-degenerate shock-forming data (analogous to the data used in Chapter 15 of \cite{Christodoulou:shockFormation}), \cite{shkoller2024geometry} considers an open set of solutions whose velocity and density were of size $O(\frac{1}{\epsilon})$. The portion of $\singularBoundary\cup\CauchyHorizon$ obtained in \cite{shkoller2024geometry} is constrained between two constant-time hypersurfaces which are of Euclidean distance $O(\epsilon)$ apart. The takeaway is that both \cite{AbbresciaSpeck,shkoller2024geometry}, which provide explicit constructive proofs of the implicit original arguments of 
\cite{Christodoulou:shockFormation}, only construct a \emph{compact} spacetime portion of the MGHD's boundary in a vicinity of the initial singularity. Fortunately, the portion of the MGHD and its boundary constructed in \cite{AbbresciaSpeck,shkoller2024geometry} \emph{does} satisfy the geometric condition of \cite{EperonReallSbierski}. However, since the speed of sound in non-relativistic Euler may be extremely large depending on the equation of state, \textbf{\emph{it does not preclude a different singularity from forming far away from the initial singularity that could potentially invalidate the geometric condition from holding.}}\footnote{On the other hand, the speed of sound for the relativistic Euler equations is always uniformly bounded by the speed of light of the ambient Minkwski space. In this case, it is possible that some sort of local uniqueness of MGHD's holds, see the list of open problems in Appendix\,\ref{S:PROBLEMSANDPERSPECTIVE}.} In other words, to prove classical determinism holds for the Cauchy problem of shock-forming data for $3D$ non-relativistic compressible Euler without symmetry, one must solve the outstanding open problem of constructing the full MGHD of the data. Similar results from the present work are extremely likely to hold for $1D$ isentropic compressible Euler flow, but analogous results in $3D$ without symmetry, especially in the presence of vorticity and entropy, will require numerous new insights due to the immense difficulties in controlling the geometry of the infinitely compressing characteristics globally across spacetime.

\subsubsection{The first main result} \label{SSS:THEFIRSTMAINRESULT}
The precise statement of our main result on the MGHD is given as follows.

\begin{theorem}[Characterization of the maximal global hyperbolic development]
\label{thm:main:MGHD}
For the initial value problem \eqref{E:QNLW:INTRO}-\eqref{E:QNLW:DATA:INTRO}, 
\begin{enumerate}
\item{} [Existence] There is a classical solution $\soluClassical$ on an open set $\soluDomainClassical\subsetneq\Reals^{1+1}$ with $\Psi_C \in(- \frac{\pi}{2}, \frac{\pi}{2})$. In particular, $\gfour$ is always a well-defined Lorentzian metric on $\soluDomainClassical$ (see \eqref{EqAcMetric}).
\label{pt:classicalProperies:existence}
\item{} [Uniqueness] $\soluDomainClassical$ is the unique maximal globally hyperbolic development of the data posed on $\{t=0\}\times\Reals$.
\label{pt:classicalProperties:MGHDuniqueness}
\item{} [Characterisation of the boundary] \label{pt:classicalProperties:characterisationOfTheBoundary} The boundary of the MGHD consists of:
    \begin{enumerate}
        \item The disjoint union of a codimension $1$ submanifold called the singular boundary $\singularBoundary$, a codimension $2$ submanifold called the initial singularity $\initialSingularity$, and a codimension $1$ submanifold $\CauchyHorizon$ called the \emph{Cauchy horizon}. Each of these are connected, and the boundary is connected. 
        \label{pt:classicalProperties:singularAndRegularBoundary}
        \item Let $\mathscr{S}(z):= (t = 1 + z^2, x = (2- \arctan(z))(1+z^2) + z)$. Then the singular boundary may be parametrized as 
        \begin{align}
            \singularBoundary =  \{ \mathscr{S}(z) \ | \ z \in (0,\infty)\}. \label{E:PARAMETRIZEDSINGULARBOUNDARY}
        \end{align}
        We also have that $\initialSingularity = \mathscr{S}(0)$.
        \label{pt:classicalProperties:parametrizeSingularBoundary}
        \item Both $\CauchyHorizon$ and $\singularBoundary$ emanate from $\initialSingularity$ in the sense that $\textnormal{cl}(\CauchyHorizon)\backslash\CauchyHorizon=\initialSingularity$ and $\textnormal{cl}(\singularBoundary)\backslash\singularBoundary=\initialSingularity$, where $\textnormal{cl(A)}$ denotes the topological closure of a set $A$. %
        \label{pt:classicalProperties:emanateFromInitialSingularity}
        \item The solution $\soluClassical$ extends smoothly to $\CauchyHorizon$. 
        \label{pt:classicalProperties:smoothAtCauchyHorizon}
        \item The solution $\soluClassical$ extends as a $C^{1,1/3}$ function on $\initialSingularity$ and as a $C^{1,1/2}$ function on $\singularBoundary$, and this regularity is sharp. Consequently, for any $p \in \initialSingularity\cup\singularBoundary$ and any sequence $\{p_n\}_{n=1}^\infty \subset \soluDomainClassical$ such that $p_n \to p$, then $|\p^2 \soluClassical(p_n)| \to \infty$.
        \label{pt:classicalProperties:regularityAtSingularBoundary}
  
        \item The acoustical metric $\gfour$ extends as a $C^{0,1/3}$, a $C^{0,1/2}$, and as a smooth Lorentzian metric to $\initialSingularity$, $\singularBoundary$, and $\CauchyHorizon$, respectively. By abuse of notation, we also denote the extended acoustical metric by $\gfour$. Similarly, $\Lunit$ extends as a $C^{0,1/3}$ and $C^{0,1/2}$ acoustically null vectorfield to $\initialSingularity$ and $ \singularBoundary$, respectively.
        \label{pt:classicalProperties:regularityOfMetric}
        \item The smooth vectorfield $\uLunit$ is everywhere tangent to $\CauchyHorizon$, making the Cauchy horizon acoustically null with respect to the extended metric. Similarly, the extended vectorfield $\Lunit$ is everywhere tangent to $\singularBoundary$, making the singular boundary intrinsically  acoustically null with respect to the extended metric $\gfourextend$.
        \label{pt:classicalProperties:boundaryIsNull}
\end{enumerate}
\item{} [Degeneracies along the singular boundary] 
 Let $p \in \singularBoundary$ and consider the backwards ODE initial value problem for the integral curves of the extended $\Lunit$:
            \begin{subequations} \label{E:ODEIVPFOREXTENDEDL}
                \begin{align}
                    \frac{d}{d s} \gamma(s) & = \Lunit \circ \gamma(s), \\
                    \gamma(0) & = p.
                \end{align}
            \end{subequations}
            Then \eqref{E:ODEIVPFOREXTENDEDL} has two different solutions $\gamma_1,\gamma_2: (-a,0] \to \Reals^{1+1}$, where $\gamma_1(-a,0] \subset \singularBoundary$ denotes the integral curve along the singular boundary emanating backwards from $p$, and $\gamma_2(-a,0] \subset \soluDomainClassical$ denotes the integral curve in the MGHD whose smooth evolution terminates at $p \in \singularBoundary$ (and is depicted by an orange characteristic in Fig.\,\ref{fig:MGHD_both_characteristics}).
            \label{PT:NONUNIQUEINTEGRALCURVES}
\end{enumerate}
\end{theorem}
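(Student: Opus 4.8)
The plan is to exhibit two explicit $C^1$ solutions of the backwards problem \eqref{E:ODEIVPFOREXTENDEDL} sharing the datum $\gamma(0)=p$, and then to verify that they are genuinely distinct. Conceptually, the non-uniqueness is forced by the regularity established in part \ref{pt:classicalProperties:regularityOfMetric}: the extended field $\Lunit$ is only $C^{0,1/2}$, and not Lipschitz, along $\singularBoundary$. Exactly as for the scalar model $\dot y = |y|^{1/2}$, $y(0)=0$, the loss of a full derivative preserves Peano existence but destroys Picard--Lindel\"of uniqueness. The two solutions I will produce are $\gamma_1$, a reparametrization of the singular boundary itself, and $\gamma_2$, the characteristic whose smooth evolution terminates at $p$; the crux is that these curves are \emph{tangent} at $p$ yet curve away from one another, which is precisely the geometric signature of a caustic.

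First I would fix $p=\mathscr{S}(z_0)$ with $z_0\in(0,\infty)$ via \eqref{E:PARAMETRIZEDSINGULARBOUNDARY} and record the straight-line structure of the outgoing characteristics. A direct computation using \eqref{E:QNLW:INTRO} and \eqref{E:DEFINITIONOFPSI} gives $\Lunit\Psi=0$, so $\Psi$ is constant along each integral curve of $\Lunit$; combined with the data $\Psi(0,x)=-\arctan x$ read off from \eqref{E:QNLW:DATA:INTRO}, the characteristic carrying the label $z_0$ is the straight line of slope $2-\arctan z_0$ along which $\Lunit=\p_t+(2-\arctan z_0)\p_x$. This line reaches the caustic exactly at $p$. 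Reparametrizing it by $s$ so that its $\p_t$-component equals $1$ (consistent with the relation $\tfrac{dt}{ds}=1$ that every solution of \eqref{E:ODEIVPFOREXTENDEDL} must satisfy) and normalizing $s=0$ at $p$ yields an affine, hence $C^1$, solution $\gamma_2:(-a,0]\to\Reals^{1+1}$ with $\gamma_2\big((-a,0)\big)\subset\soluDomainClassical$ and initial velocity $\dot\gamma_2(0)=\Lunit(p)$, for any $0<a\le z_0^2$.

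Next I would construct $\gamma_1$ from the singular boundary. By the tangency established in part \ref{pt:classicalProperties:boundaryIsNull}, $\Lunit$ is everywhere tangent to $\singularBoundary$, and by part \ref{pt:classicalProperties:regularityOfMetric} it extends continuously (as $C^{0,1/2}$) to $\singularBoundary$; its limiting value at $\mathscr{S}(z)$ is $\p_t+(2-\arctan z)\p_x$, since $\Psi$ is continuous up to the boundary and constant $=-\arctan z$ along the terminating characteristic. Differentiating \eqref{E:PARAMETRIZEDSINGULARBOUNDARY} shows that the tangent of $\singularBoundary$ at $\mathscr{S}(z)$ is $2z\,\big(\p_t+(2-\arctan z)\p_x\big)=2z\,\Lunit$, which is nonzero for $z>0$; hence the arc of $\singularBoundary$ through $p$ can be reparametrized (by $s$ with $\tfrac{dz}{ds}=\tfrac{1}{2z}$, i.e.\ $s=z^2-z_0^2$) into a $C^1$ integral curve $\gamma_1:(-z_0^2,0]\to\singularBoundary$ of the extended $\Lunit$ with $\gamma_1(0)=p$ and $\dot\gamma_1(0)=\Lunit(p)$. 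Thus both $\gamma_1$ and $\gamma_2$ solve \eqref{E:ODEIVPFOREXTENDEDL} and share the same position and velocity at $s=0$.

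Finally I would prove $\gamma_1\neq\gamma_2$. The curve $\gamma_2$ is a straight line, whereas the slope of $\singularBoundary$ at $\mathscr{S}(z)$ equals $2-\arctan z$, a strictly monotonic function of $z$; hence $\singularBoundary$ has nonvanishing curvature at $p$ and cannot agree with its tangent line on any subinterval. The two solutions therefore separate immediately for $s<0$, giving the asserted non-uniqueness. I expect the main obstacle to be the rigorous verification that $\gamma_1$ is a bona fide solution of \eqref{E:ODEIVPFOREXTENDEDL} for the merely H\"older field $\Lunit$: this requires matching the reparametrized tangent of $\singularBoundary$ to the extended value $\Lunit\circ\gamma_1$ at \emph{every} point of the arc, not just at $p$, which in turn rests on the continuity of the extension of $\Psi$ to $\singularBoundary$ and on the fact that the boundary-tangent $\p_t$-component $2z$ stays nonzero for $z>0$ --- degenerating only as $z\to 0^+$ at the initial singularity $\initialSingularity=\mathscr{S}(0)$, which is correctly excluded by taking $a\le z_0^2$.
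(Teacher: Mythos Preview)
Your proof plan is correct and follows the same approach as the paper: the two solutions of \eqref{E:ODEIVPFOREXTENDEDL} are precisely the arc of $\singularBoundary$ (reparametrized as an integral curve of the extended $\Lunit$, using its tangency from part \ref{pt:classicalProperties:boundaryIsNull}) and the straight characteristic from $\soluDomainClassical$ terminating at $p$. The paper dispatches this point in two sentences by invoking the tangency already established, whereas you spell out the explicit reparametrization $s=z^2-z_0^2$ and the curvature argument for distinctness; these details are correct and simply make explicit what the paper leaves implicit.
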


 Point \eqref{PT:NONUNIQUEINTEGRALCURVES} implies that the singular boundary features \emph{causal bubbles}, a concept first introduced in the context of low regularity Lorentzian geometry \cite{ChruscielGrant}. Causal bubbles represent a breakdown of smooth causality where the causal past of a point is much larger than its timelike past. The notion of causal bubbles is discussed in detail in Sect.\,\ref{SS:CAUSALBUBBLES}.
\begin{corollary}[Causal bubbles] \label{cor:causalBubbles}
    Let $(\soluDomainClassical,\gfour)$ denote the MGHD constructed in Theorem \ref{thm:main:MGHD} and let $\gfourextend$ denote the continuous extension to the closure $\textnormal{cl}(\soluDomainClassical)$. Then any point $p \in \singularBoundary$ has a causal bubble with respect to the extended acoustical metric $\gfourextend$.
\end{corollary}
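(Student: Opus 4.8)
The plan is to verify the definition of a causal bubble from \cite{ChruscielGrant}: it suffices to produce a nonempty open set contained in $J^-(p)\setminus\overline{I^-(p)}$, where $J^-$ and $I^-$ denote the causal and chronological pasts, computed with future-directed causal, respectively timelike, curves for the continuous extended metric $\gfourextend$. For a smooth Lorentzian metric one has $J^-(p)\subseteq\overline{I^-(p)}$, so this set is empty; its nonemptiness is exactly the failure of smooth causality. The candidate open set is the ``wedge'' cut out near $p$ between the two backward integral curves $\gamma_1,\gamma_2$ of the extended $\Lunit$ supplied by Theorem~\ref{thm:main:MGHD}\eqref{PT:NONUNIQUEINTEGRALCURVES}: the curve $\gamma_1$ runs backward along $\singularBoundary$ toward $\initialSingularity$, while $\gamma_2$ enters the interior $\soluDomainClassical$. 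Since both are null (their tangent is the acoustically null $\Lunit$, by Theorem~\ref{thm:main:MGHD}\eqref{pt:classicalProperties:boundaryIsNull}) and they emanate from $p$ with the common tangent $\Lunit(p)$ but separate at higher order, they bound a thin region $W$ opening backward from $p$.

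First I would characterize $\overline{I^-(p)}$ locally. Near $p$ the chronological past is the open cone swept out by backward timelike curves, whose tangents lie strictly between the two backward null directions $-\Lunit(p)$ and $-\uLunit(p)$; its ingoing null boundary is the integral curve of the smooth vectorfield $\uLunit$, and its outgoing null boundary is exactly $\gamma_2$, the characteristic reaching $p$ from within $\soluDomainClassical$. Thus, in a neighborhood of $p$, $\overline{I^-(p)}$ lies on the ingoing side of $\gamma_2$, while $W$ lies on the opposite ($\gamma_1$) side. I would then show $W\cap\overline{I^-(p)}=\emptyset$: a point of $W$ sits strictly beyond the outgoing null curve $\gamma_2$ bounding $I^-(p)$, hence is spacelike separated from $p$ in the smooth interior metric on $\soluDomainClassical$, and $\gamma_2$ separates it from the chronological cone, so it cannot be a limit of points of $I^-(p)$.

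Next I would show $W\subset J^-(p)$ by concatenating two null curves. Given $q\in W$, the outgoing foliation of $\soluDomainClassical$ used to build the MGHD carries $q$ forward along a null characteristic terminating at a unique point $r=\mathscr{S}(z_1)\in\singularBoundary$ (for the boundary point $r$ this is precisely the interior curve $\gamma_2$ of Theorem~\ref{thm:main:MGHD}\eqref{PT:NONUNIQUEINTEGRALCURVES}); choosing $q$ close to $p$ on the $\gamma_1$-side of $\gamma_2$ forces $z_1<z_0$, where $p=\mathscr{S}(z_0)$. From $r$ one then travels along $\singularBoundary$ toward $p$: by Theorem~\ref{thm:main:MGHD}\eqref{pt:classicalProperties:boundaryIsNull} the curve $\singularBoundary$ is acoustically null with tangent $\Lunit$, hence future-directed as $z$ increases, so the segment $\mathscr{S}([z_1,z_0])$ is a future-directed causal curve from $r$ to $p$. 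The concatenation is a future-directed causal curve from $q$ to $p$ for $\gfourextend$, so $q\in J^-(p)$. Since such $q$ fill a nonempty open subset of $W$, this yields the desired open subset of $J^-(p)\setminus\overline{I^-(p)}$.

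The main obstacle is the disjointness $W\cap\overline{I^-(p)}=\emptyset$: because $\gfourextend$ is only Hölder continuous up to $\singularBoundary$ (Theorem~\ref{thm:main:MGHD}\eqref{pt:classicalProperties:regularityOfMetric}), I cannot invoke the smooth identity $\overline{I^-(p)}=J^-(p)$, and I must rule out \emph{all} future-directed timelike curves from wedge points to $p$, including those that might try to cross $\gamma_2$ into the chronological cone. I would control this by passing to characteristic (null) coordinates adapted to $\Lunit$ and $\uLunit$ near $p$, in which $\gamma_2$ is a coordinate null line and $I^-(p)$ is the coordinate quadrant on its ingoing side; the monotonicity of the null coordinates along any causal curve then confines $I^-(p)$ to that quadrant and keeps $W$ out of its closure. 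A secondary, more routine point is the orientation bookkeeping ensuring $z_1<z_0$, which follows from continuity of the terminal map $z\mapsto\mathscr{S}(z)$ of the outgoing foliation established in the proof of Theorem~\ref{thm:main:MGHD}.
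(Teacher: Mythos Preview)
Your proposal is correct and follows essentially the same approach as the paper: both identify the causal bubble as the wedge between the two backward integral curves of the extended $\Lunit$ from $p$ (one along $\singularBoundary$, one into $\soluDomainClassical$), argue that this wedge lies in $J^-(p)$ by concatenating null curves through $\singularBoundary$, and argue that $I^-(p)$ is bounded on the outgoing side by the interior characteristic $\gamma_2$ because a timelike curve cannot trace out the null boundary $\singularBoundary$. The paper's argument is considerably more informal than yours---it does not introduce null coordinates or worry explicitly about $\overline{I^-(p)}$ versus $I^-(p)$---but the structure and the key geometric observation (Theorem~\ref{thm:main:MGHD}\eqref{PT:NONUNIQUEINTEGRALCURVES} together with the nullity of $\singularBoundary$) are the same.
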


We highlight the following important fact: point \eqref{PT:NONUNIQUEINTEGRALCURVES} from Theorem \ref{thm:main:MGHD}, as well as the causal bubbles, imply that $\singularBoundary$ is acoustically $\emph{spacelike}$ from the point of view of its \emph{extrinsic} geometry. This is in contrast to point \eqref{pt:classicalProperties:boundaryIsNull} from Theorem \ref{thm:main:MGHD}, which states that $\singularBoundary$ is null with respect to the extended metric $\gfourextend$, i.e., with respect to its \emph{intrinsic} geometry, it is null. This surprisingly subtle fact was first discussed in Chapter 15 of Christodoulou's monograph \cite{Christodoulou:shockFormation} and the first and fourth author's work \cite{AbbresciaSpeck} in the context of $3D$ compressible fluid flow. We elaborate on this point in the much simpler context of our model problem in Sect.\,\ref{SS:CAUSALBUBBLES}.

We close this subsection by stating that, with minor additional effort, the results of Theorem \ref{thm:main:MGHD} could be proved to be \emph{stable}. That is, small perturbations of the initial data \eqref{E:QNLW:DATA:INTRO} give rise to solutions whose classical evolution ends due to a formation of shocks, and that the MGHD of this perturbed data satisfies the same quantitative (e.g. regularity) and qualitative (e.g. uniqueness) results of Theorem \ref{thm:main:MGHD}.

\subsection{Unique entropy and shock developed weak solutions} \label{SS:GLOBALWEAK}

We now discuss our second main result, which is roughly the construction of the unique global in spacetime entropy weak solutions  to \eqref{E:QNLW:INTRO}--\eqref{E:QNLW:DATA:INTRO}. To this end, we write our second order equation \eqref{E:QNLW:INTRO} in divergence form:
\begin{align} \begin{split}
    0 & = - \p_t(\p_t \solu - 2 \p_x \solu) - (2 + \p_t \solu - 2 \p_x \solu) \p_x (\p_t \solu - 2 \p_x \solu) \\
    & = - \p_t(\p_t \solu - 2 \p_x \solu) - \frac{1}{2}\p_x \left\{\left(2 + \p_t \solu - 2 \p_x \solu\right)^2\right\} \\
    & = -\p_t \soluBurgers - \frac{1}{2} \p_x (2+\soluBurgers)^2,\label{E:QNLWINDIVERGENCEFORM}
    \end{split}
\end{align}
where the last identity follows from \eqref{E:DEFINITIONOFPSI}. In particular, it follows that $\soluBurgers$ is a solution to Burgers' equation in divergence form. There is a well established uniqueness theory for entropy solutions to scalar conservation laws, which we now adapt to \eqref{E:QNLWINDIVERGENCEFORM}.

\subsubsection{Definitions of the classes of weak solutions} \label{SSS:WEAKSOLUTIONCLASSESDEFS}

In order to properly state our results, we first provide precise definitions of weak solutions as well as the class of entropy solutions for which our results apply to.

Our definition of weak solutions is motivated from Bressan's famous notes \cite{Bressan2013}.\footnote{Technically, solutions of the Cauchy problem in Definition 3 of \cite{Bressan2013} consider weak solutions defined on $[0,T]\times\R$. We have modified the definition to allow for more general domains.} It is clear from the definition below that the $\soluClassical$ constructed in Theorem\,\ref{thm:main:MGHD} is a weak solutions to \eqref{E:QNLWINDIVERGENCEFORM} on the MGHD $\soluDomainClassical$.

\begin{definition}[Weak solutions to \eqref{E:QNLWINDIVERGENCEFORM}] \label{D:WEAKSOLUTIONS}
Let $\Omega \subset \R^{1+1}$ be an open set. Then we say $\Phi \in W^{1,1}_{\textnormal{loc}}(\Omega)$ is a \emph{weak solution} to \eqref{E:QNLWINDIVERGENCEFORM} if, for any $\varphi \in C_c^1(\Omega)$, it follows that 
\begin{align} \label{E:WEAKSOLUTIONIDENTITY}
    0 = \int_{\Omega} \left\{ (\p_t \Phi - 2 \p_x \Phi) \p_t \varphi(t,x) + \frac{1}{2} (2 +\p_t \Phi - 2\p_x \Phi)^2 \p_x \varphi(t,x)\right\} \, \mathrm{d} t \mathrm{d} x.
\end{align}

Let $\Omega \subseteq \R^{1+1}$ be open and satisfy $\{t=0\} \times \R \subset \Omega$. Then we say $\Phi \in W^{1,1}_{\textnormal{loc}}(\Omega)$ is a \emph{weak solution to the Cauchy problem for} \eqref{E:QNLWINDIVERGENCEFORM} if, for any $\varphi \in C_c^1(\Omega)$, it follows that 
\begin{align}
    \begin{split} \label{E:WEAKSOLUTIONTOIVPIDENTITY}
        0 & = \int_{\Omega\cap\{t\ge 0\}} \left\{ (\p_t \Phi - 2 \p_x \Phi) \p_t \varphi(t,x) + \frac{1}{2} (2 + \p_t \Phi - 2\p_x \Phi)^2 \p_x \varphi(t,x)\right\} \, \mathrm{d} t \mathrm{d} x \\
        & \ \ + \int_{\Omega \cap \{t=0\}} (\p_t \Phi - 2 \p_x \Phi)(0,x) \varphi(0,x) \, \mathrm{d} x.
    \end{split}
\end{align}

\end{definition}

Uniqueness of weak solutions to hyperbolic conservation laws, much like the notion of uniqueness to MGHD's, is quite subtle. For the vast majority of the literature (see \cite{Bressan2013,cDafermos} and the references therein), uniqueness is typically discussed \emph{within a class of solutions satisfying certain properties}. For example, the well known Lax entropy conditions apply only to the class of weak solutions for which there is a curve (or hypersurface in multi-$D$) of discontinuity and a smooth solution on either side. 

In this paper, we study two classes of weak solutions to \eqref{E:QNLWINDIVERGENCEFORM}. The first is a class of Oleinik entropy solutions, for which there is a well established uniqueness theory for scalar conservation laws. 

\begin{definition}[Class of Oleinik entropy solutions] \label{D:CLASSOFENTROPYSOLUTIONS} We say $\Phi\in W^{1,\infty}(\Omega)$ is an Oleinik entropy solution, or just entropy solution for short, to \eqref{E:QNLWINDIVERGENCEFORM} on $\Omega \subseteq [0,\infty)\times \R$ if it is a weak solution as defined in Definition \ref{D:WEAKSOLUTIONS} and there exists a $C$ such that 
\begin{align}
    \frac{(\p_t \Phi - 2 \p_x \Phi)(t,x + z) - (\p_t \Phi - 2 \p_x \Phi)(t,x)}{z} \le C \left(1 + \frac{1}{t}\right) \label{E:OLEINIKCONDITION}
\end{align}
for almost every $x,z \in \R$ with $z > 0$. 

\end{definition}

\begin{remark}[Possible irregularity of Oleinik entropy solutions]
    It follows from \eqref{E:OLEINIKCONDITION} that the derivatives of these entropy solutions may be right-sided-Lipschitz while experiencing discontinuous jumps from the left.
\end{remark}

The other class of weak solutions we study are solutions to the \emph{shock development problem} (SDP). In Lax's well known survey article \cite{LaxFormationAndDecay72}, he gave a basic, qualitative description of the SDP:
\begin{quote}
  What happens after continuous solutions cease to exist? After all, the world does not come to an end. For an answer, we turn to experiments with compressible fluids: these clearly show the appearance of discontinuities in solutions.  
\end{quote}
In other words, the SDP describes the \emph{transition} from a smooth classical solution developing infinite second derivatives on $\initialSingularity$, to a weak solution with a single curve of discontinuity $\shockwave$ emanating from $\initialSingularity$ across which the Rankine--Hugoniot jump conditions and Lax entropy conditions hold. We now provide the precise definition of the shock development problem for our model.

\begin{definition}[The shock development problem for \eqref{E:QNLWINDIVERGENCEFORM}] \label{D:SHOCKDEVELOPMENT}
Let $\soluDomainClassical$ be the unique MGHD for the classical solutions $\soluClassical, \ \soluBurgersClassical:= \uLunit \soluClassical$ of \eqref{E:QNLW:INTRO}--\eqref{E:QNLW:DATA:INTRO} given by Theorem\,\ref{thm:main:MGHD}. In particular, the future boundary of $\soluDomainClassical$ is given by the initial singularity $\initialSingularity$, the singular boundary $\singularBoundary$, and the Cauchy horizon $\CauchyHorizon$. Let $\gfourextend$ denote the continuous extension to $\textnormal{cl}(\soluDomainClassical)$.

Then the shock development problem is to find a curve $\shockwave$ parametrized by $\{(t,k(t)) \ | \ t \in [1,T)\}$ referred to as \emph{the shock curve}, such that:
    \begin{enumerate}
        \item $\shockwave$ lies to the causal past of the singular boundary $\singularBoundary$. \label{pt:SDP:shockwaveInPastOfSingularBoundary}
        \item $\shockwave$ emanates from the initial singularity $\initialSingularity = (t=1,x=0) = (1,k(1))$. \label{pt:SDP:emanateFromInitialSingularity}
        \item $\shockwave$ is asymptotically $\gfour$-null at $\initialSingularity$ in the sense that it shares the same tangent vector to $\textnormal{cl}(\singularBoundary)$ at the initial singularity $\initialSingularity$. \label{pt:SDP:asymptoticallyNull}
        \item There is a weak solution $\soluSDP$ in the sense of Definition \ref{D:WEAKSOLUTIONS} with weak $\uLunit$ derivative $\soluBurgersSDP:= \partial_t \soluSDP - 2 \partial_x \soluSDP$ defined 
         on a region $\soluDomainSDP$ bounded in the past by $\CauchyHorizon$ and $\shockwave$ such that:
            \begin{enumerate} 
                \item On $\CauchyHorizon$, $(\soluBurgersSDP,\soluSDP)$ agrees with the smooth extension of $(\soluBurgersClassical,\soluClassical)$ to the Cauchy horizon.
                \item \label{E:SHOCKDEVELOPMENTEXTENSIONTOK} 
                To the left of $\shockwave$, $\soluBurgersSDP(t,x)$ is smoothly extendable up to $\shockwave$ and, by abuse of notation denoting the extended value along $\shockwave$ also by $\soluBurgersSDP$, this extension is uniquely determined by the \emph{Rankine--Hugoniot jump conditions}
                    \begin{align}
                        k'(t) & = \frac{\frac{1}{2} \big(2+\soluBurgersSDP(t,k(t))\big)^2 - \frac{1}{2}\big(2+\soluBurgersClassical(t,k(t))\big)^2}{\soluBurgersSDP(t,k(t)) - \soluBurgersClassical(t,k(t))} \notag \\
                        & = \frac{1}{2}\left\{ \big(2+\soluBurgersSDP(t,k(t))\big) + \big(2+\soluBurgersClassical(t,k(t))\big)\right\}.\label{E:RANKINEHUGONIOT}
                    \end{align}
            \end{enumerate} \label{pt:SDP:RankineHugoniot}
        \item Let $\gfour(\soluBurgersSDP)$ denote the acoustical metric in $\soluDomainSDP$ extended to the left of $\shockwave$ as in point \eqref{E:SHOCKDEVELOPMENTEXTENSIONTOK}. Then, with respect to $\gfour(\soluBurgersClassical)$, the shock curve is \emph{spacelike}. With respect to $\gfourextend(\soluBurgersSDP)$, the shock curve is \emph{timelike}.  This is called  \emph{the geometric determinism condition.}  \label{pt:SDP:supersonic}
    \end{enumerate}
\end{definition}

To see the ``transition'' mentioned before Definition \ref{D:SHOCKDEVELOPMENT}, let $J^-(\CauchyHorizon\cup\shockwave)$ represent the causal past of $\CauchyHorizon\cup\shockwave$ (which is a subset of the MGHD $\soluDomainClassical$), see Sect. \ref{SS:CAUSALBUBBLES}. Then the ``transition'' is given by the following weak solution:

\begin{align} 
    \begin{cases} \label{E:TRANSITIONSOLUTIONFORSDP}
        (\soluBurgersClassical,\soluClassical) & \text{on $J^-(\CauchyHorizon\cup\shockwave)$} \\
        (\soluBurgersSDP,\soluSDP) & \text{on $\soluDomainSDP$}.
    \end{cases}
\end{align}
Given \eqref{E:TRANSITIONSOLUTIONFORSDP}, one may view the geometric determinism condition as the following statement: the shock curve is \emph{supersonic} with respect to the solution to its causal past and \emph{subsonic} with respect to the solution on its causal future.

\begin{remark}[``Global'' shock development] \label{R:NOGLOBALDSHOCKDEVELOPMENT}
It will be clear from Theorem \ref{T:UNIQUEWEAKSOLUTIONS} below that our global unique entropy solution is a solution of the shock development problem with $T$ replaced by $\infty$. However, we do not make an effort to establish a notion of ``global'' solutions to the SDP because it is not clear the extent to which they exist for large times in multiple spatial dimensions. In particular, it is conjectured that singularities could form dynamically in finite time along shock hypersurfaces in multi-$D$, see Appendix \ref{S:PROBLEMSANDPERSPECTIVE} for a list of outstanding open problems. To date, the only global-in-time result in multi-$D$ concerning weak solutions to a hyperbolic PDE \emph{system} without symmetry assumptions is the recent work of Ginsberg--Rodnianski \cite{ginsberg2024stability}. The class of weak solutions studied in \cite{ginsberg2024stability} are those which contain two nearly spherical shock fronts dispersing away from one another at a precise rate. The PDEs in question there agree with the $3D$ irrotational isentropic Euler equations in the region outside of the outer-most-shock.

\end{remark}

\begin{remark}[The shock development problem in $1D$ or under symmetry reductions]
As we will see in the proof of Theorem\,\ref{T:UNIQUEWEAKSOLUTIONS}, we will prove the Lorentzian geometric notions of the SDP (e.g., $\shockwave$ is asymptotically $\gfour$-null at the initial singularity $\initialSingularity$, the determinism condition) as a \emph{consequence} of the already constructed Oleinik entropy solutions as stated in Remark \ref{R:NOGLOBALDSHOCKDEVELOPMENT}. That is, for our $1D$ model. Indeed, the reader will not find any mention of Lorentzian metrics in the first rigorous proof of the SDP by Lebaud \cite{lebaud1994description} for the $1D$ plane symmetric $p$-system. The same can be said for the generalization of Lebaud's work to  $3D$ spherically symmetric compressible Euler\cite{yin2004formation}, as well as $2D$ Euler in a symmetry class that allows for vorticity \cite{buckmaster2022simultaneous}. 

The reader might ask why, then, was the SDP formulated so geometrically in Definition \ref{D:SHOCKDEVELOPMENT}. The reason is that the Lorentzian geometry of the acoustical metric and its characteristics \emph{were fundamental in the only known SDP result in multiple dimensions without symmetry}: Christodoulou's 2019 breakthrough monograph on the \emph{restricted} shock development problem \cite{Christodoulou:shockDevelopment}. Our definition of the SDP is meant to emulate the one in \cite{Christodoulou:shockDevelopment} through the lens of our model. We discuss \cite{Christodoulou:shockDevelopment} in Sect. \ref{SSS:RESTRICTEDSHOCKDEVELOPMENT}. 

\end{remark}

\subsubsection{The second main result} \label{SS:MAINRESULTFORWEAKSOLUTIONS}

The precise statement of our main result for  entropy solutions is given as follows.

\begin{theorem}[Unique global entropy and shock developed weak solutions] \label{T:UNIQUEWEAKSOLUTIONS}
The Cauchy problem for \eqref{E:QNLWINDIVERGENCEFORM} has the following properties:

\begin{enumerate}
\item{} [Global existence and uniqueness] There is a weak solution $\soluWeak$ to \eqref{E:QNLWINDIVERGENCEFORM} on $\soluDomainWeak =\mathbb{R}^{1+1}$ that is unique within the class of Oleinik entropy solutions.  \label{pt:weakSolution:existenceUniqueness}
\item{} 
[$\soluWeak$ is not an extension of $\soluClassical$] Let $(\soluClassical,\soluDomainClassical)$ be the unique classical solution and MGHD of the Cauchy problem \eqref{E:QNLW:INTRO}--\eqref{E:QNLW:DATA:INTRO} given by Theorem \ref{thm:main:MGHD}. Then there is a point $p\in\soluDomainClassical$ at which the weak and classical solutions differ, i.e. $\soluWeak(p)\not=\soluClassical(p)$.
\label{pt:weakDoesntExtendClassical}
\item{} [Characterisation and global hyperbolicity of domain of agreement]
There is an open set $\soluDomainAgree\subsetneq\soluDomainClassical$ on which the two solutions agree, i.e. on $\soluDomainAgree$, $\soluClassical=\soluWeak$. Furthermore, $\soluDomainAgree$ is a globally hyperbolic development of $\hsfcData$ with respect to the initial value problem \eqref{E:QNLW:INTRO}--\eqref{E:QNLW:DATA:INTRO}, and $\hsfcData\subsetneq\soluDomainAgree\subsetneq\soluDomainClassical\subsetneq\soluDomainWeak$.  \label{pt:weakSolution:characterisationOfDomain}
\item{} [Shock development] The unique entropy solution $\soluWeak$ constructed above is also in the class of solutions to the shock development problem as in Definition \ref{D:SHOCKDEVELOPMENT}. In particular, the SDP is solved on $\soluDomainSDP := \big(\R^{1+1}\big) \setminus \soluDomainAgree$ by $(\soluBurgersSDP,\soluSDP) := \big(\soluBurgersWeak|_{\soluDomainSDP},\soluWeak|_{\soluDomainSDP}\big)$. \label{pt:weakSolution:shockDevelopment}
\item{}[Geometric determinism of the shock]  The geometric determinism condition of the shock development problem (see Definition \ref{D:SHOCKDEVELOPMENT} part \eqref{pt:SDP:supersonic})   is equivalent to the \emph{Lax entropy conditions}, that is, 
            \begin{align}
                2 + \soluBurgersClassical(t,k(t)) < k'(t) < 2 + \soluBurgersSDP(t,k(t)). \label{E:LAXENTROPY}
            \end{align} \label{pt:weakSolution:geometricDeterminism}
\item{} [Weak singularities along the Cauchy horizon]
The weak solution $\soluWeak$ extends the classical solution $\soluClassical$ in a $C^1$ manner across $\CauchyHorizon$. However, on $\soluDomainSDP$, the first derivatives of $\soluWeak$ only extend to $\CauchyHorizon$ in $C^{0,1/2}$. \label{pt:weakSolution:weakSingularityAcrossCauchyHorizon}
\end{enumerate}
\end{theorem}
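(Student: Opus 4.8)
The plan is to reduce the entire theorem to the theory of the scalar conservation law obeyed by $\soluBurgers=\uLunit\solu$, and then to recover $\solu$ by integrating along the ingoing characteristics, which are the \emph{straight lines} $x+2t=\mathrm{const}$ since $\uLunit=\p_t-2\p_x$ has constant coefficients. By the divergence form \eqref{E:QNLWINDIVERGENCEFORM}, $\soluBurgers$ solves $\p_t\soluBurgers+\p_x\big(\tfrac12(2+\soluBurgers)^2\big)=0$, a genuinely nonlinear scalar conservation law with strictly convex flux and smooth, bounded, strictly decreasing Cauchy data $\soluBurgers(0,\cdot)=-\arctan(\cdot)$. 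For part \eqref{pt:weakSolution:existenceUniqueness} I would invoke the classical Lax--Oleinik/Kruzhkov theory to produce the unique entropy solution $\soluBurgersWeak\in L^\infty(\Reals^{1+1})$ obeying the one-sided bound \eqref{E:OLEINIKCONDITION} (for $t<0$ the characteristics diverge, so the solution is smooth there), and then \emph{define} $\soluWeak(t,x):=\int_0^t\soluBurgersWeak\big(s,x+2(t-s)\big)\,\di s$, the unique $W^{1,\infty}_{\mathrm{loc}}$ primitive with $\uLunit\soluWeak=\soluBurgersWeak$ and $\soluWeak(0,\cdot)=0$. Uniqueness then rests on the observation that $\solu\mapsto\uLunit\solu$ and the above integration are mutually inverse bijections between Oleinik entropy solutions of \eqref{E:QNLWINDIVERGENCEFORM} and entropy solutions of the scalar law: the weak identity \eqref{E:WEAKSOLUTIONIDENTITY} for $\solu$ is \emph{verbatim} the weak form of the scalar law for $\uLunit\solu$, so any Oleinik entropy solution $\solu$ has $\uLunit\solu=\soluBurgersWeak$, and then $\uLunit(\solu-\soluWeak)=0$ with $(\solu-\soluWeak)|_{\hsfcData}=0$ forces $\solu=\soluWeak$ because every point is joined to $\hsfcData$ by an ingoing line.

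For parts \eqref{pt:weakDoesntExtendClassical}--\eqref{pt:weakSolution:characterisationOfDomain} I would exploit the explicit characteristic map $X(t,x_0)=x_0+(2-\arctan x_0)t$ of Theorem \ref{thm:main:MGHD}, whose Jacobian $\p_{x_0}X=1-\tfrac{t}{1+x_0^2}$ first vanishes at $(x_0,t)=(0,1)$, i.e.\ at $\initialSingularity=\mathscr{S}(0)$. On the open wedge bounded by $\shockwave$ and $\singularBoundary$, which lies in $\soluDomainClassical$ but not in $\soluDomainAgree$, the classical value $\soluBurgersClassical$ is transported along a characteristic that has not yet reached the envelope, whereas $\soluBurgersWeak$ has already jumped across the shock; by the entropy inequality $\soluBurgersWeak-\soluBurgersClassical$ has a fixed (positive) sign there, so $\soluBurgersWeak\neq\soluBurgersClassical$ on this open set. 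Part \eqref{pt:weakDoesntExtendClassical} is then immediate: if $\soluWeak$ agreed with $\soluClassical$ on any open subset of the wedge, applying $\uLunit$ would give $\soluBurgersWeak=\soluBurgersClassical$ there, a contradiction, so there is a point $p$ with $\soluWeak(p)\neq\soluClassical(p)$. For part \eqref{pt:weakSolution:characterisationOfDomain} I set $\soluDomainAgree:=\mathrm{int}\,J^-(\CauchyHorizon\cup\shockwave)$, the causal past of the tent formed by the Cauchy horizon and the shock; it contains $\hsfcData$ (no shock has formed at $t=0$), on it $\soluBurgersWeak=\soluBurgersClassical$ and hence $\soluWeak=\soluClassical$ by the integration formula, and it is a globally hyperbolic development because its future boundary $\CauchyHorizon\cup\shockwave$ is $\gfour$-achronal and every past-inextendible causal curve from a point of $\soluDomainAgree$ meets $\hsfcData$. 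The strict chain $\hsfcData\subsetneq\soluDomainAgree\subsetneq\soluDomainClassical\subsetneq\soluDomainWeak=\Reals^{1+1}$ follows from the previous two points.

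Parts \eqref{pt:weakSolution:shockDevelopment}--\eqref{pt:weakSolution:geometricDeterminism} concern the shock development structure of $\soluBurgersWeak$. Setting $\soluDomainSDP:=\Reals^{1+1}\setminus\soluDomainAgree$ and $(\soluBurgersSDP,\soluSDP):=(\soluBurgersWeak,\soluWeak)|_{\soluDomainSDP}$, the Rankine--Hugoniot relation \eqref{E:RANKINEHUGONIOT} and the matching of $(\soluBurgersSDP,\soluSDP)$ with the classical trace on $\CauchyHorizon$ are automatic from the entropy theory, so conditions \eqref{pt:SDP:shockwaveInPastOfSingularBoundary}--\eqref{pt:SDP:RankineHugoniot} of Definition \ref{D:SHOCKDEVELOPMENT} reduce to analysing the shock curve $t\mapsto k(t)$ near $\initialSingularity$. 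Here I would show, from the Lax--Oleinik envelope, that $k$ emanates from $\mathscr{S}(0)$ with $k'(t)\to 2$ as $t\downarrow1$, which equals both the outgoing null speed $2+\soluBurgers|_{\initialSingularity}=2$ and the limiting slope of $\mathrm{cl}(\singularBoundary)$ (differentiating \eqref{E:PARAMETRIZEDSINGULARBOUNDARY} gives $\tfrac{dx}{dt}\big|_{\singularBoundary}=2-\arctan z\to2$), establishing the asymptotic $\gfour$-nullity \eqref{pt:SDP:asymptoticallyNull}. The equivalence \eqref{pt:weakSolution:geometricDeterminism} is then a direct computation: from \eqref{EqAcMetric} the tangent $T=\p_t+k'\p_x$ satisfies
\begin{align}
    \gfour(T,T)=\frac{4}{(4+\soluBurgers)^2}\big(k'-(2+\soluBurgers)\big)(k'+2),\notag
\end{align}
whose roots are exactly the null speeds $2+\soluBurgers$ and $-2$. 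In the convention where $\gfour(\p_t,\p_t)<0$, the shock (which satisfies $k'>0>-2$) is $\gfour(\soluBurgersClassical)$-spacelike iff $k'>2+\soluBurgersClassical$ and $\gfourextend(\soluBurgersSDP)$-timelike iff $k'<2+\soluBurgersSDP$; together these are precisely the Lax inequalities \eqref{E:LAXENTROPY}, which hold for the entropy shock.

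Part \eqref{pt:weakSolution:weakSingularityAcrossCauchyHorizon} is where the genuine analytic work lies and is the step I expect to be the main obstacle. The Cauchy horizon $\CauchyHorizon$ is the single ingoing line through $\initialSingularity$, across which $\soluClassical$ is smooth by Theorem \ref{thm:main:MGHD}\eqref{pt:classicalProperties:smoothAtCauchyHorizon}; since $\soluBurgersWeak$ is continuous across $\CauchyHorizon$ and $\uLunit\soluWeak=\soluBurgersWeak$, the function $\soluWeak$ extends $\soluClassical$ in a $C^1$ manner, giving the first assertion. The sharp statement is that the transversal derivative of $\soluWeak$ restricted to $\soluDomainSDP$ extends to $\CauchyHorizon$ in $C^{0,1/2}$ and no better. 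I would prove this through the formula $\p_x\soluWeak(t,x)=\int_0^t\p_\xi\soluBurgersWeak\big(s,x+2(t-s)\big)\,\di s$: the integrand $\p_\xi\soluBurgersWeak$ is smooth on $\soluDomainSDP$ away from $\initialSingularity$ but blows up like the inverse square root of the distance to $\initialSingularity$ (because $\p_{x_0}X$ vanishes to first order there, exactly the mechanism producing the $C^{1,1/2}$ regularity on $\singularBoundary$ in Theorem \ref{thm:main:MGHD}\eqref{pt:classicalProperties:regularityAtSingularBoundary}), while the ingoing lines foliating $\soluDomainSDP$ near $\CauchyHorizon$ pass within a controlled distance of $\initialSingularity$. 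Integrating this inverse-square-root singularity along the ingoing lines yields a transversal modulus of continuity of order $(\mathrm{dist}\text{ to }\CauchyHorizon)^{1/2}$ on the SDP side, in contrast to the smooth behaviour on the $\soluDomainAgree$ side. The delicate part---and the crux of the whole argument---is to show that the exponent $1/2$ is \emph{attained} rather than merely an upper bound, which requires the precise second-order expansion of $X$ about $\initialSingularity$, a matching lower bound on the singular integral, and a careful accounting of how the shock curve and the Cauchy horizon both degenerate at $\initialSingularity$.
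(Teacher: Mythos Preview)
Your overall strategy---reduce to Burgers for $\soluBurgers=\uLunit\solu$ and recover $\solu$ by integrating along the straight ingoing lines---is exactly the paper's approach, and parts \eqref{pt:weakSolution:existenceUniqueness}--\eqref{pt:weakSolution:geometricDeterminism} are handled essentially the same way. Your factorisation $\gfour(T,T)=\tfrac{4}{(4+\soluBurgers)^2}(k'-(2+\soluBurgers))(k'+2)$ is a clean way to see the equivalence with Lax; the paper instead plugs in the explicit value $k'=2$ and reads off the sign $\gfour(T,T)=-16\soluBurgers/(4+\soluBurgers)^2$, but the content is identical.

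The gap is in part \eqref{pt:weakSolution:weakSingularityAcrossCauchyHorizon}. Your proposed formula $\partial_x\soluWeak(t,x)=\int_0^t\partial_\xi\soluBurgersWeak(s,x+2(t-s))\,\di s$ is not valid on $\soluDomainSDP$: the ingoing line from any point above $\CauchyHorizon$ crosses the shock $\shockwave$, where $\soluBurgersWeak$ has a genuine jump, so differentiating under the integral misses a boundary term. Moreover, the claimed ``inverse square root'' blow-up of $\partial_\xi\soluBurgersWeak$ near $\initialSingularity$ is not the right rate: on either side of $\shockwave$, the formula $\partial_x\soluBurgers=\dataBurgers'(z)/(1+t\dataBurgers'(z))$ together with $t_{\mathrm{cross}}-1\sim z^2/3$ gives $\partial_x\soluBurgers\sim (\mathrm{dist}\text{ to }\initialSingularity)^{-1}$, which is not integrable along lines through $\initialSingularity$ and cannot by itself produce a $\sqrt{\epsilon}$ modulus.

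The paper's mechanism is different and cleaner. It splits the $\uLunit$-integral at the shock crossing and uses $\partial_x\soluBurgers=-\tfrac{1}{4+\soluBurgers}\uLunit\soluBurgers$ (a consequence of $\Lunit\soluBurgers=0$) to evaluate each piece exactly as $\ln(4+\soluBurgers)$ at the endpoints. This produces, for $(t_0,x_0)\in\soluDomainSDP$, an explicit jump term
\[
\lim_{\epsilon\to 0^+}\ln\frac{4+\soluBurgersWeak(t_{\mathrm{cross}}+\tfrac{\epsilon}{2},\,x_{\mathrm{cross}}-\epsilon)}{4+\soluBurgersWeak(t_{\mathrm{cross}}-\tfrac{\epsilon}{2},\,x_{\mathrm{cross}}+\epsilon)},
\]
which is exactly the shock strength at the crossing $(t_{\mathrm{cross}},x_{\mathrm{cross}})$. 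As $(t_0,x_0)\to\CauchyHorizon$ one has $t_{\mathrm{cross}}\to 1$, and the characteristics from $\pm z$ meeting at $\shockwave$ give a jump $2\arctan z\sim 2\sqrt{3(t_{\mathrm{cross}}-1)}\sim\sqrt{6\epsilon}$ where $\epsilon$ is the transversal distance to $\CauchyHorizon$. This term is absent below $\CauchyHorizon$, which simultaneously gives the $C^1$ matching and the sharp $C^{0,1/2}$ from above. So the $\tfrac12$ exponent comes from the \emph{shock strength} near $\initialSingularity$, not from integrating a singular gradient.
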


\begin{figure}
\includegraphics[]{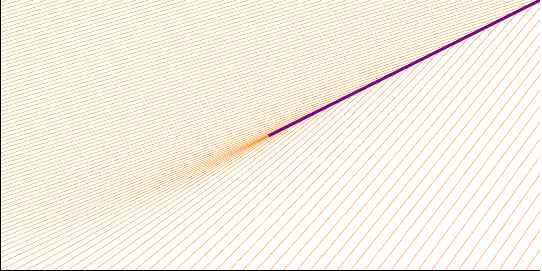}
\caption{An illustration of the domain for the solution provided by Theorem \ref{T:UNIQUEWEAKSOLUTIONS} which is both unique in the Oleinik entropy class and a solution to the shock development problem. The outgoing characteristics are given in solid orange, while the shock curve is given in purple.}
\label{fig:BurgersBlowUp}
\end{figure}

\subsubsection{Christodoulou's $3D$ restricted shock development problem} \label{SSS:RESTRICTEDSHOCKDEVELOPMENT}

We now describe Christodoulou's breakthrough solution \cite{Christodoulou:shockDevelopment} to the restricted shock development problem in more detail. He studied solutions to a class of $3D$ quasilinear wave equations of the form
\begin{align} \label{E:CHRISTODOULOUWAVEEQUATIONS}
    (\mathbf{g}^{-1})^{\alpha \beta}(\pmb{\partial} \Phi)
    \partial_{\alpha} \partial_{\beta} \Phi
    & = 0.
\end{align}
In \eqref{E:CHRISTODOULOUWAVEEQUATIONS}, 
$\mathbf{g}$ is a Lorentzian metric (a quadratic form of signature $(-,+,+,+)$) 
whose standard coordinate components $\mathbf{g}_{\alpha \beta}$ are functions of the spacetime gradient of $\Phi$, denoted by $\pmb{\partial} \Phi$.
The particular equations studied in \cite{Christodoulou:shockDevelopment} came from irrotational and isentropic compressible fluid mechanics, either relativistic or non-relativistic. More precisely, for \emph{classical} solutions -- which was not the class of solutions studied in \cite{Christodoulou:shockDevelopment} -- $\Phi$ can be interpreted as a potential function, and the density and velocity can be computed in terms of $\pmb{\partial} \Phi$; the detailed expressions as well as the precise structure of $\mathbf{g}_{\alpha \beta}$ depend on the fluid equation of state.
For the below discussion, it is crucial to appreciate that the fluid equations can be written in divergence form and hence for the wave equations studied in \cite{Christodoulou:shockDevelopment},
equation \eqref{E:CHRISTODOULOUWAVEEQUATIONS} can be written in divergence form as follows:
\begin{align}  \label{E:DIVERGENCECHRISTODOULOUWAVEEQUATIONS}   
    \partial_{\alpha}
    \left\lbrace
        \mathcal{N}(\pmb{\partial} \Phi)
        (m^{-1})^{\alpha \beta}\partial_{\beta} \Phi
    \right\rbrace
    & = 0,
\end{align}
where $m = \mbox{diag}(-1,1,1,1)$ is the Minkowski metric and the precise form of the nonlinearity $\mathcal{N}(\pmb{\partial} \Phi)$ depends on the fluid equation of state, see \cite[Eq. (1.87)]{Christodoulou:shockDevelopment}.

Christodoulou's initial data for solving the SDP were a portion of an MGHD that can be split into four key regions: 
\begin{enumerate}
\item A region of classical existence.
\item The singular boundary, that is, a manifold-with-boundary along which 
    $\pmb{\partial}^2 \Phi$ blows up, with $\pmb{\partial} \Phi$ and $\Phi$ remaining bounded. 
    Physically, the boundedness of $\pmb{\partial} \Phi$ implies the boundedness of the velocity and density.
\item The past boundary of the singular boundary, which is a co-dimension $2$ spacelike\footnote{Note that it make sense to talk about $\mathbf{g}(\pmb{\partial} \Phi)$ on the singular boundary because $\pmb{\partial} \Phi$ remains bounded up to it.} (with respect to $\mathbf{g})$ sub-manifold
    serving as a higher-dimensional analog of the cusp point in Fig.\,\ref{F:BOTHSOLUTIONSINONEPICTURE}.
    This set is referred to as the \emph{crease} in \cite{AbbresciaSpeck} and has been alternatively referred to as the ``first singularity'' or ``pre-shock'' by other authors. In \cite{Christodoulou:shockDevelopment}, the crease was assumed to have the topology of $\mathbb{S}^2$, though that was not of fundamental importance for the analysis. In contrast, the fact that the set is a spacelike co-dimension $2$ submanifold was fundamentally important.
\item The Cauchy horizon, which is a $\mathbf{g}$-null hypersurface-with-boundary emanating from the crease such that the solution remains smooth up to it, except at the crease (where $\pmb{\partial}^2 \Phi$ blows up). The Cauchy horizon therefore lies in the domain of influence of the singularity, but the singularity does not propagate along it.
\end{enumerate}

One might ask if initial data for the SDP can arise from $C^{\infty}$ initial data prescribed on $\lbrace t = 0 \rbrace$. The answer is essentially ``yes,'' with a few caveats. Specifically, in \cite{Christodoulou:shockFormation}, Christodoulou studied open sets of smooth initial data given at $\lbrace t = 0 \rbrace$ for  equation \eqref{E:CHRISTODOULOUWAVEEQUATIONS} in $3$ space dimensions, and he derived a large -- though not fully explicit -- portion of the classical MGHD, including a portion of the crease and singular boundary. The first constructive description of an entire connected component of the crease and an $O(1)$ portion of the singular boundary
emanating from it was derived in \cite{AbbresciaSpeck}, where the authors treated the case of $3$ space dimensions for solutions with vorticity and entropy. The first constructive description of a portion of an MGHD that also included a portion of a Cauchy horizon was derived in \cite{shkoller2024geometry}. More precisely, for adiabatic equations of state in $2$ space dimensions, the authors studied isentropic solutions with size $O(1/\epsilon)$ gradients for the initial velocity and density. For $\epsilon$ sufficiently small, they derived an $O(\epsilon)$-portion of the crease and singular boundary, the portion that is contained within an $O(\epsilon)-$sized spacetime slab whose top and bottom boundaries are flat constant-time hypersurfaces. We also mention the forthcoming work \cite{AbbresciaSpeck2}, which in $3$ space dimensions for solutions with vorticity and entropy will complement the results of \cite{AbbresciaSpeck} by providing an $O(1)$ portion of a Cauchy horizon emanating from an entire connected component of the crease.

We now informally describe what is a solution to the SDP. The starting point is the initial data as described above and the divergence form \eqref{E:DIVERGENCECHRISTODOULOUWAVEEQUATIONS} of the equation. The fruitful notion of a  solution, adopted here from \cite{Christodoulou:shockDevelopment}, is a higher-dimensional analog of Definition~\ref{D:SHOCKDEVELOPMENT}. More precisely, a (local) solution to SDP is a portion of a shock hypersurface $\mathcal{K}$ emanating from the crease and a weak solution to equation \eqref{E:DIVERGENCECHRISTODOULOUWAVEEQUATIONS} that is defined in a neighborhood of the crease and that is smooth on either side of $\mathcal{K}$, where appropriate limits of $\Phi$ up to 
$\mathcal{K}$ from either side exist. The definition of a weak solution from \cite{Christodoulou:shockDevelopment} is essentially a higher-dimensional analog\footnote{More precisely, in \cite{Christodoulou:shockDevelopment}, Christodoulou's notion of a weak solution does not involve test functions. Instead, he integrates the divergence-form equation \eqref{E:DIVERGENCECHRISTODOULOUWAVEEQUATIONS} over an arbitrary spacetime domain $\mathcal{V}$ and then posits that Stokes' theorem can be applied, i.e., he replaces the spacetime integral over $\mathcal{V}$ with the boundary integrals that one would formally obtain from Stokes' theorem if $\Phi$ were $C^2$. This yields an identity for boundary integrals. By appropriately adapting the domains to $\mathcal{K}$ and assuming that $\Phi$ is smooth up to $\mathcal{K}$ on either side of it (i.e., piecewise smooth), one derives the jump condition \eqref{E:RANKINEHUGONIOTFORRETRICTEDSHOCK}; see \cite[Section~1.4]{Christodoulou:shockDevelopment} 
for more details. This same jump condition could alternatively be derived by imposing a weak formulation of equation \eqref{E:DIVERGENCECHRISTODOULOUWAVEEQUATIONS}  using test functions, as in Definition~\ref{D:WEAKSOLUTIONS}.} 
of the one given in Definition~\ref{D:WEAKSOLUTIONS}. As in the one-space-dimensional case, this leads to nonlinear Rankine--Hugoniot jump conditions, specifically:
\begin{align} \label{E:RANKINEHUGONIOTFORRETRICTEDSHOCK}
    \xi_{\alpha} [[\mathcal{N}(\pmb{\partial} \Phi)(m^{-1})^{\alpha \beta}\partial_{\beta} \Phi]]
    & = 0, 
    \mbox{at all points in } \mathcal{K}
\end{align}
where $\xi$ is the co-vector that is co-normal to $\mathcal{K}$ and
$[[\mathcal{N}(\pmb{\partial} \Phi)(m^{-1})^{\alpha \beta}\partial_{\beta} \Phi]]$ 
denotes the jump across $\mathcal{K}$
in the nonlinearity from equation \eqref{E:DIVERGENCECHRISTODOULOUWAVEEQUATIONS}.
Moreover, as is described in \cite{Christodoulou:shockDevelopment}, $\Phi$ itself is continuous across $\mathcal{K}$, the derivatives of $\Phi$ in directions tangent to $\mathcal{K}$ should be continuous across $\mathcal{K}$, and $\Phi$ and its full spacetime gradient are continuous across the Cauchy horizon. 

For the well-posedness of SDP, one fundamentally needs \emph{determinism conditions}, which we now explain. The first determinism condition states that $\mathcal{K}$ should be spacelike with respect to the metric $\mathbf{g}(\pmb{\partial} \Phi)$ from equation \eqref{E:CHRISTODOULOUWAVEEQUATIONS} when $\Phi$ is the past solution, before $\mathcal{K}$. From a practical perspective, this means that $\mathcal{K}$ sticks out into the classical MGHD, and thus the behavior of the solution to the past of $\mathcal{K}$ is determined by smooth data
given at $\lbrace t = 0 \rbrace$, in fact uniquely determined if the MGHD is unique! The second determinism condition states that
$\mathcal{K}$ should be timelike with respect to $\mathbf{g}(\pmb{\partial} \Phi)$ when $\Phi$ is the future solution, i.e., the solution after the jump across $\mathcal{K}$. 
This second determinism condition is used in the analysis to uniquely determine the jump across $\mathcal{K}$ and to therefore connect the future solution on $\mathcal{K}$ with the past one on 
$\mathcal{K}$.

We now emphasize the difference between the restricted shock development problem and the full shock development problem for compressible fluids. 
Moreover, as we mentioned above, $\Phi$ and its derivatives in directions tangent to $\mathcal{K}$ should be continuous across $\mathcal{K}$, and $\Phi$ and its full spacetime gradient are continuous across the Cauchy horizon. 
For the full compressible Euler equations in three space dimensions, instead of the scalar conservation law \eqref{E:DIVERGENCECHRISTODOULOUWAVEEQUATIONS}, one has 5 conservation laws, representing the conservation of mass, momentum (a three-vector), and energy. One can prove that for smooth solutions with vanishing vorticity and constant entropy, the compressible Euler equations are equivalent to equation \eqref{E:DIVERGENCECHRISTODOULOUWAVEEQUATIONS}. However, the equivalence of the two formulations does not generally hold for weak solutions. In particular, for piecewise smooth solutions that jump across a shock hypersurface $\mathcal{K}$, the jump conditions for the full compressible Euler equations -- which are obtained from 5 coupled conservation laws -- are not equivalent to \eqref{E:RANKINEHUGONIOTFORRETRICTEDSHOCK}. For example, the full compressible Euler jump conditions imply that the entropy $s$ should jump across $\mathcal{K}$, and that for small shocks, the entropy jump, denoted $\Delta s$, should scale like $(\Delta p)^3$, where $\Delta p$ is the jump in the fluid pressure across $\mathcal{K}$; see \cite[Section~1.4]{Christodoulou:shockDevelopment}. Therefore, even if the fluid solution is isentropic and irrotational in the region of classical existence, the weak solution will develop entropy (and, as it turns out, vorticity) to the future of the crease. This effect is not captured

It is also worth noting that in the context of fluid mechanics with vorticity and entropy, if the jump in fluid pressure is sufficiently small, then 
the aforementioned determinism conditions for $\mathcal{K}$ are equivalent to the entropy strictly increasing across $\mathcal{K}$, i.e., to the second law of thermodynamics;
see \cite[Equation (1.291) and (1.328)]{Christodoulou:shockDevelopment} and the discussion around it. However, if the jump in pressure is large, then there is no guarantee that the determinism conditions -- which seem essential for the well-posedness of SDP -- will cause the entropy to increase across $\mathcal{K}$ (in principle, it could decrease).

\subsection{Overview of the rest of the article}

Section \ref{s:Burgers} discusses solutions of Burgers' equation, which will be used as foundations for the proof of the main theorems in Section \ref{s:pfMainThm}. Appendix \ref{SecApp} provides a brief summary of some of the key causal concepts, which are common in the general relativity community but applicable more generally in the study of quasilinear wave equations. Appendix \ref{S:PROBLEMSANDPERSPECTIVE} surveys some open problems in the study of shocks and singularity formation.

\subsection{Acknowledgements}

L. Abbrescia gratefully acknowledges support from a Travel Support for Mathematicians grant from the Simons Foundation.
J. Sbierski gratefully acknowledges support through the Royal Society University Research Fellowship URF\textbackslash R1\textbackslash 211216. J. Speck gratefully acknowledges support from NSF grants DMS-2349575 and DMS-2054184. We are grateful to ICMS for funding the conference ``Singularity formation in General Relativity and Dispersive PDE'', where many of the core ideas in this paper were first developed.


\section{Relevant points from Burgers' equation}
\label{s:Burgers} 

We find it convenient to set up the proofs of Theorems \ref{thm:main:MGHD} and \ref{T:UNIQUEWEAKSOLUTIONS} in the context of Burgers' equation $\Lunit \soluBurgers = \p_t \soluBurgers + (2+\Psi) \p_x \soluBurgers = 0$. This is because expressing the Cauchy problem for the quasilinear wave equation \eqref{E:QNLW:INTRO}--\eqref{E:QNLW:DATA:INTRO} as a system of first order equations produces a fully decoupled Burgers' equation \eqref{E:FirstOrderBurgers} in the direction of $\Lunit$, see \eqref{E:DOUBLENULLFRAME} and \eqref{E:QNLWINDIVERGENCEFORM}: 
\begin{subequations}
\label{E:FIRSTORDERSYSTEMCAUCHYPROBLEM}
    \begin{align}
        \Lunit \Psi & = 0, &  \Psi(0,x) & := \dataBurgers(x) = - \arctan(x), 
        \label{E:FirstOrderBurgers}\\
        \uLunit \Phi & = \Psi,   & \Phi(0,x) & :=  0. 
        \label{E:FirstOrderWavePartshockdevelopmentp}
    \end{align}
\end{subequations}
Consequently, the genuinely non-linear nature of the outgoing characteristic $\Lunit$ is the only catalyst for the singularities in our main results. After solving Burgers, we simply treat it as a source term to the linear transport equation \eqref{E:FirstOrderWavePartshockdevelopmentp} that can be explicitly solved. We stress, however, that \emph{there is no causal structure associated to Burgers' equation alone} because the ingoing characteristics $\uLunit$ are not featured in \eqref{E:FirstOrderBurgers}. In particular, even though \eqref{E:FirstOrderBurgers} is equivalent to Burgers equation, without the second characteristic speed in the system given by $\uLunit$ in \eqref{E:FirstOrderWavePartshockdevelopmentp}, \emph{there is no notion of global hyperbolicity} for Burgers equation. In fact, it is easy to prove that classical extensions past the initial singularity for \eqref{E:FirstOrderBurgers} \emph{without} considering \eqref{E:FirstOrderWavePartshockdevelopmentp} are non-unique.

\subsection{Burgers equation on the relevant spacetime regions for the MGHD and shock development problem}
\label{ss:BurgersExample}
We first introduce the spacetime regions mentioned in our main theorems and then analyze the behavior of the solution to Burgers equation on them. For the reader's convenience, we reproduce Fig.\,\ref{F:BOTHSOLUTIONSINONEPICTURE} here. 

\begin{figure}[h]
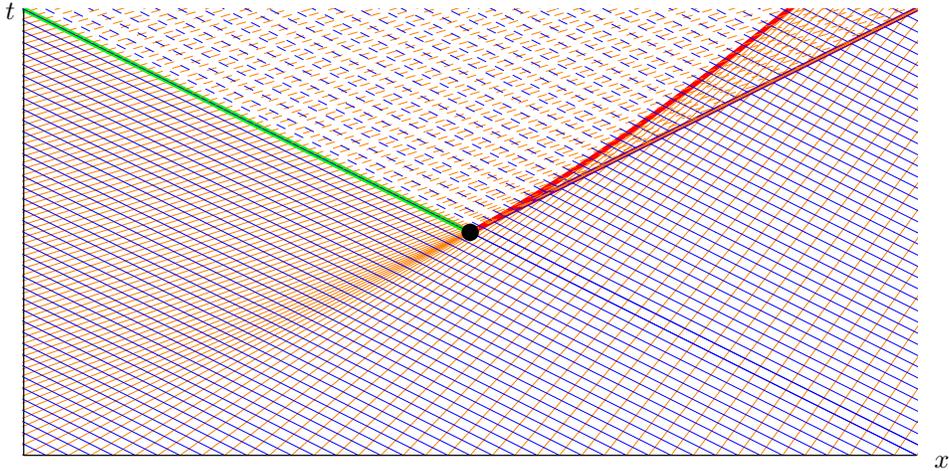

    \begin{overpic}[scale=1.3, grid = false, tics=5, trim=-.5cm 0cm -1cm -.5cm, clip]{three_regions_with_shear.pdf}
        \put (3,42) {$t$}
        \put (92,-1) {$x$}
    \end{overpic}
    \caption{A figure of the ingoing and outgoing characteristics on the relevant spacetime regions denoted by the solid blue and orange lines, respectively.}
    \label{F:REPRODUCEDBOTHSOLUTIONSINONEPICTURE}
\end{figure}

The shapes of the regions in Figure\,\ref{F:REPRODUCEDBOTHSOLUTIONSINONEPICTURE} are governed by the integral curves of $\Lunit$, namely, the outgoing characteristics. For $t \in [0,\infty)$ and $x_0 \in\mathbb{R}$, we define
	\begin{align}
			\charcurveL_{x_0}(t)  & := (t,x_0 + t(2+ \dataBurgers(x_0))). \label{E:OUTGOINGCHARCURVEFORMULA}
        \end{align}
It will be shown in Lemma\,\ref{L:BURGERSSOLUTIONSMOOTHONMGHD} that these are in fact the integral curves of $\Lunit$ on the MGHD.

\begin{definition}[The key spacetime regions] \label{D:KEYSPACETIMEREGIONS}
For $x_0\in[0,\infty)$, we denote the $t$ and $x$ coordinates of $\charcurveL_{x_0}(t)$ evaluated at $t = -\frac{1}{\dataBurgers'(x_0)}$ to be
\begin{subequations}
\begin{align}
\tSingularBoundaryFromData(x_0) & := - \frac{1}{\dataBurgers'(x_0)} = 1+x_0^2 ,\label{E:TCOORDINATEOFSINGULARSETFROMDATA} \\
\xSingularBoundaryFromData(x_0) & := (2-\arctan(x_0))(1+x_0^2) +x_0 \label{E:XCOORDINATEOFSINGULARSETFROMDATA}.
\end{align}
\end{subequations}
For $t\geq 1$, define the functions
\begin{subequations}
\begin{align}
\xSingularBoundary(t) & := \xSingularBoundaryFromData(\sqrt{t-1}) ,
\label{eq:positionOfSingularBoundary}\\
\xRegularBoundary(t) & := 4 - 2t
\end{align}
\end{subequations}

Define the preliminary sets
\begin{subequations}
\begin{align}
\BurgersDomainRight & := \bigcup_{x \in (0,\infty)} \left\{ \charcurveL_{x}(t) \ | \ t \in \left(-\infty,-\frac{1}{\dataBurgers'(x)}\right) \right\} , \label{E:BURGERSDOMAINRIGHT} \\
\BurgersDomainLeft & := \bigcup_{x \in (-\infty,0]} \left\{ \charcurveL_{x}(t) \ | \ t \in \left(-\infty,\frac{-4 + x}{-4 - \dataBurgers(x)}\right) \right\},  \label{E:BURGERSDOMAINLEFT} \\
\Omega_3 & := \bigcup_{x \in (0,\infty)}\left\{ \charcurveL_{x}(t) \ | \ t \in \left(-\infty,-\frac{x}{\dataBurgers(x)}\right) \right\},
\end{align}
\end{subequations}
which denote spacetime sets foliated by outgoing characteristics that terminate at the singular boundary, the the Cauchy horizon, and the shock, respectively. Denote further the sets\footnote{It will be shown later that there is a weak solution on $\soluDomainWeak$, there is a classical solution on $\soluDomainClassical$, and the two solutions agree on $\soluDomainAgree$.}  
\begin{subequations}
\begin{align}
\soluDomainWeak & = \Reals^{1+1} ,\\
\soluDomainClassical & = \BurgersDomainRight \cup \BurgersDomainLeft, \label{E:DEF:MGHDASASET}\\
\soluDomainAgree & =  \BurgersDomainLeft \cup \Omega_3,
\end{align}
\label{eq:soluDomains}
\end{subequations}
the $1$-dimensional sets,  
\begin{subequations}
\begin{align}
\singularBoundary & = \{(t,x): t>1, x=\xSingularBoundary(t) \},
\label{eq:def:singularBoundary}\\
\CauchyHorizon & = \{(t,x): t>1, x=\xRegularBoundary(t) \} , \label{eq:def:cauchyhorizon} \\
\shockwave & = \left\{(t,x): t>1, x = 2t \right\}, \label{eq:def:discontinuousshock}
\end{align}
\end{subequations}
and the point
\begin{align}
\initialSingularity ={}& (\tSingularBoundaryFromData(0),\xSingularBoundaryFromData(0)) = (t = 1,x = 2).
\end{align}
In Figure \ref{F:REPRODUCEDBOTHSOLUTIONSINONEPICTURE}, the \textbf{Cauchy horizon} $\CauchyHorizon$ is given by the green curve, the \textbf{singular boundary} $\singularBoundary$ is given by the red curve, the \textbf{shock} $\shockwave$ is given by the purple curve, and the \textbf{initial singularity} $\initialSingularity$ is given by the black point. The region $\soluDomainClassical$, which will ultimately be the MGHD, is represented by the region underneath $\CauchyHorizon\cup \initialSingularity\cup \singularBoundary$ and is where both the ingoing and outgoing characteristics are solid. The region $\soluDomainAgree$ is represented by the region underneath $\CauchyHorizon\cup \initialSingularity\cup \shockwave$. 
\end{definition}

The following lemma describes the classical solution of Burgers' equation on $\soluDomainClassical$.

\begin{lemma}[The Classical solution on $\soluDomainClassical$] \label{L:BURGERSSOLUTIONSMOOTHONMGHD}
    Let $\soluDomainClassical$ be as in \eqref{E:DEF:MGHDASASET}. Then $\charcurveL_{x_0}(t)$ are the characteristic curves of $\Lunit$ emanating from $(0,x_0) \in \{0\}\times\R$, respectively. That is, 
	\begin{align}
		\frac{\mathrm{d}}{\mathrm{d} t} \charcurveL_{x_0}^\alpha(t) = \Lunit^\alpha \circ \charcurveL_{x_0}(t), & &  \charcurveL_{x_0}(0) = (0,x_0) \label{E:DEFINITIONOFCHARACERISTICCURVES}
	\end{align}
and the following identities hold along them:
	\begin{align} \label{E:SOLUTIONCONSTANTONCHARACTERISTICS}
		\soluBurgers \circ \charcurveL_{x_0}(t) = \dataBurgers(x_0), & & \p_x \soluBurgers \circ \charcurveL_{x_0}(t) =  \frac{\dataBurgers'(x_0)}{1 + t\dataBurgers'(x_0)}.
    \end{align}

Consequently, the Cauchy problem \eqref{E:FirstOrderBurgers} has a unique solution $\soluBurgers \in C^1(\soluDomainClassical)$.
\end{lemma}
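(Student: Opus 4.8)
The plan is to run the method of characteristics, treating the explicit family $\charcurveL_{x_0}$ from \eqref{E:OUTGOINGCHARCURVEFORMULA} as a candidate foliation of $\soluDomainClassical$ and recovering every assertion from the foliation property. First I would record the elementary fact that any $C^1$ solution of $\Lunit \soluBurgers = 0$ is constant along the integral curves of $\Lunit$, since $\frac{d}{dt}(\soluBurgers \circ \gamma) = (\Lunit \soluBurgers)\circ \gamma = 0$ along such a curve $\gamma$. Feeding $\soluBurgers \equiv \dataBurgers(x_0)$ back into $\Lunit = \p_t + (2+\soluBurgers)\p_x$ collapses the characteristic ODE to $\dot t = 1$, $\dot x = 2 + \dataBurgers(x_0)$, whose solution is exactly $\charcurveL_{x_0}(t) = (t, x_0 + t(2+\dataBurgers(x_0)))$; this simultaneously motivates the formula and verifies \eqref{E:DEFINITIONOFCHARACERISTICCURVES} once $\soluBurgers$ has been constructed. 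I would therefore \emph{define} $\soluBurgers$ on $\soluDomainClassical$ by $\soluBurgers(t,x) := \dataBurgers(x_0)$, where $x_0$ is the unique data point whose characteristic passes through $(t,x)$; the content of the lemma is that this assignment is well defined and $C^1$.

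The heart of the proof is the foliation claim: the characteristic map $\Xi(t,x_0) := (t, x_0 + t(2+\dataBurgers(x_0)))$ is a diffeomorphism from the $(t,x_0)$-parameter region onto $\soluDomainClassical$. Surjectivity is immediate from the definitions \eqref{E:BURGERSDOMAINRIGHT}--\eqref{E:BURGERSDOMAINLEFT} of $\BurgersDomainRight, \BurgersDomainLeft$ as unions of the curves $\charcurveL_{x_0}$. For injectivity I would compare two characteristics from $x_0 < x_1$: they meet at the single time $t = (x_1 - x_0)/(\dataBurgers(x_0) - \dataBurgers(x_1))$, which the mean value theorem rewrites as $t = -1/\dataBurgers'(\xi) = 1 + \xi^2$ for some $\xi \in (x_0,x_1)$, using $\dataBurgers'(x) = -1/(1+x^2)$. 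In the right region both $x_0,x_1$ are positive, so $\xi > x_0$ forces the crossing time $1 + \xi^2$ to strictly exceed the cutoff $1 + x_0^2 = \tSingularBoundaryFromData(x_0)$ defining $\BurgersDomainRight$; hence the two characteristics never coexist in $\soluDomainClassical$ at their crossing time, giving injectivity there, and the analogous bookkeeping against the Cauchy-horizon cutoff handles $\BurgersDomainLeft$ and the cross-region case. Local invertibility follows from the Jacobian $\p_{x_0}\Xi^x = 1 + t\dataBurgers'(x_0)$, which I would show is strictly positive on $\soluDomainClassical$: on $\BurgersDomainRight$ the cutoff $t < -1/\dataBurgers'(x_0)$ is exactly the inequality $1 + t\dataBurgers'(x_0) > 0$, while on $\BurgersDomainLeft$ one checks that the Cauchy-horizon time $(4-x_0)/(4+\dataBurgers(x_0))$ lies below the collapse time $1+x_0^2$ for $x_0 \le 0$ (with equality only at $x_0 = 0$, the initial singularity $\initialSingularity$).

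With the foliation in hand the remaining assertions are routine. The inverse function theorem upgrades the inverse $x_0 = x_0(t,x)$ to a $C^1$ map, so $\soluBurgers = \dataBurgers \circ x_0 \in C^1(\soluDomainClassical)$; that it solves \eqref{E:FirstOrderBurgers} and that the $\charcurveL_{x_0}$ are the $\Lunit$-integral curves both hold by construction, as does $\soluBurgers \circ \charcurveL_{x_0}(t) = \dataBurgers(x_0)$. The derivative identity comes from implicit differentiation of $x = x_0 + t(2+\dataBurgers(x_0))$ at fixed $t$, giving $\p_x x_0 = 1/(1 + t\dataBurgers'(x_0))$ and hence $\p_x \soluBurgers = \dataBurgers'(x_0)/(1 + t\dataBurgers'(x_0))$; the positivity of the denominator already established keeps this finite, re-confirming the $C^1$ regularity. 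Uniqueness is the standard characteristic argument: any $C^1$ solution is constant along its own $\Lunit$-integral curves, which satisfy the same reduced ODE and so coincide with $\charcurveL_{x_0}$, forcing agreement with $\soluBurgers$ throughout $\soluDomainClassical$.

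I expect the main obstacle to be the global injectivity across the union $\BurgersDomainRight \cup \BurgersDomainLeft$ — in particular ruling out crossings between a characteristic emanating from $x_0 > 0$ and one from $x_0 \le 0$ — together with the companion nonvanishing of the Jacobian on the left region, which forces the quantitative comparison of the Cauchy-horizon cutoff with the collapse time. These are precisely the places where the specific geometry of $\soluDomainClassical$, rather than generic Burgers theory, enters.
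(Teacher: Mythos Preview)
Your proposal is correct and follows essentially the same route as the paper: the method of characteristics, a case-by-case check that the characteristic curves foliate $\soluDomainClassical$ (the paper splits into the three cases $0<x_1<x_2$, $x_1\le 0<x_2$, $x_1<x_2\le 0$ exactly as you anticipate), and the resulting $C^1$ regularity. The only minor difference is that the paper obtains the formula for $\p_x\soluBurgers$ by solving the Riccati transport equation $\Lunit(\p_x\soluBurgers)=-(\p_x\soluBurgers)^2$ along characteristics, whereas you get it by implicit differentiation of the characteristic map; both are standard and your explicit Jacobian computation in fact makes the diffeomorphism step cleaner than the paper's argument.
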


\begin{proof}
    Differentiating Burgers' equation $\Lunit \soluBurgers = 0$ shows that $\p_x \soluBurgers$ solves the following transport equation:
        \begin{align}
            \Lunit \p_x \soluBurgers = - (\p_x \soluBurgers)^2. \label{E:TRANSPORTEQUATIONFORONEDEROFBURGERS}
        \end{align}
    Identities \eqref{E:DEFINITIONOFCHARACERISTICCURVES}--\eqref{E:SOLUTIONCONSTANTONCHARACTERISTICS} follow from a standard application of the method of characteristics and \eqref{E:TRANSPORTEQUATIONFORONEDEROFBURGERS}.

    By \eqref{E:SOLUTIONCONSTANTONCHARACTERISTICS}, proving that the characteristics do not intersect in $\soluDomainClassical$ will imply that $\soluBurgers \in C^1(\soluDomainClassical)$. Since the characteristics within $\soluDomainClassical$ are straight line segments by \eqref{E:OUTGOINGCHARCURVEFORMULA}, if two of them intersect, then it must occur at a unique point. Fix $x_1,x_2 \in \R$ and assume for the sake of contradiction that $\charcurveL_{x_1}$ and $\charcurveL_{x_2}$ intersect. By \eqref{E:OUTGOINGCHARCURVEFORMULA}, this must occur at time
    \begin{align}
        t = - \frac{x_2 - x_1}{\dataBurgers(x_2) - \dataBurgers(x_1)}. \label{eq:intersectionTimeGeneral}
    \end{align}

    First, consider the case $0 <x_1 < x_2$, and note that $\dataBurgers'(x) = - \frac{1}{1+x^2}$ is non-zero and strictly increasing on $(0,\infty)$. This strict convexity implies 
    \begin{align}
        - \frac{x_2 - x_1}{\dataBurgers(x_2) - \dataBurgers(x_1)} > - \frac{1}{\dataBurgers'(x_1)},
    \end{align} which contradicts our choice of upper bound for $t$ in \eqref{E:BURGERSDOMAINRIGHT}. Now, consider the case $x_1 \le 0 < x_2$. Consider the three lines $\charcurveL_{x_1}$, $\CauchyHorizon$, and $\charcurveL_{x_2}$. From properties of the arctangent, we find, at $t=1$, the respective $x$ coordinates of these lines are $x_1+2-\arctan(x_1)\leq2$, $2$, and $x_2+2-\arctan(x_2)>2$. The line $\charcurveL_{x_1}$ intersects $\CauchyHorizon$ when $x_1+t(2+\dataBurgers(x_1))=4-2t$, which occurs at \begin{align}
        t=\frac{4-x_1}{4+\dataBurgers(x_1)}, 
    \end{align}
    which we chose as the upper bound for $t$ in \eqref{E:BURGERSDOMAINLEFT}. Observe that the difference in slopes between $\charcurveL_{x_1}$ and $\charcurveL_{x_2}$ is $(2+\dataBurgers(x_2))-(2+\dataBurgers(x_1))$ $=\dataBurgers(x_2)-\dataBurgers(x_1)<0$, so since $\charcurveL_{x_2}(1)$ lies to the right of $\charcurveL_{x_1}(1)$, their intersection occurs at $t>1$. Similarly, since $-2-(2+\dataBurgers(x_1))<0$, the lines $\charcurveL_{x_1}$ and $\CauchyHorizon$ intersect at $t\geq1$. Furthermore, since $-2<2+\dataBurgers(x_2)$, the line $\CauchyHorizon$ starts at $t=1$ further to the left and is sloped more steeply to the left than $\charcurveL_{x_2}$, so the line $\charcurveL_{x_1}$ intersects $\CauchyHorizon$ before intersecting $\charcurveL_{x_2}$. Finally, consider the case $x_1<x_2<0$. Within this argument, set $a_1=\arctan(x_1)$ and $a_2=\arctan(x_2)$. The difference between the times of intersection of $\charcurveL_{x_2}$ with $\charcurveL_{x_1}$ and $\CauchyHorizon$ is 
    $(x_2-x_1)/(a_1-a_2)-(4-x_2)/(4-a_2)$, which, when combined as a single fraction, has a positive denominator and numerator given by 
    $(x_2-x_1)(4-a_2)-(4-x_2)(a_1-a_2)$
    $=4x_2-4x_1-x_2a_1+x_1a_2-4a_1+a_1x_2+4a_2-x_2a_2$
    $=4(x_2-x_1) +4(a_2-a_1)+x_1a_2+a_1x_2$, 
    which is a sum of for positive terms, and hence positive; therefore, $\charcurveL_{x_2}$ and $\charcurveL_{x_1}$ do not intersect until after $\charcurveL_{x_2}$ has left $\BurgersDomainLeft$.  Since the characteristic curves never intersect in $\BurgersDomainClassicalOnly$, the solution $\soluBurgersClassical$ is $C^1$ and unique there.

\end{proof}

\begin{lemma}
\label{lem:topologyGeometryOfSingularity}
[The topology/geometry of the sets $\initialSingularity\cup\singularBoundary$ and the behavior of $\soluBurgers$ on them] 
\label{lem:singularBehaviourOfBurgersAtSingularBoundary} 
Let $\soluDomainClassical$ be the domain as in \eqref{E:DEF:MGHDASASET} and let $\soluBurgersClassical \in C^1(\soluDomainClassical)$ denote the solution to \eqref{E:FirstOrderBurgers} from Lemma \ref{L:BURGERSSOLUTIONSMOOTHONMGHD}. For $x_0 \in [0,\infty)$, consider the continuous extension of $\soluBurgersClassical$ to $\initialSingularity\cup\singularBoundary$ given by $\soluBurgersClassical(\tSingularBoundaryFromData(x_0),\xSingularBoundaryFromData(x_0)) := \dataBurgers(x_0)$, see \eqref{E:OUTGOINGCHARCURVEFORMULA}, \eqref{E:TCOORDINATEOFSINGULARSETFROMDATA}--\eqref{E:XCOORDINATEOFSINGULARSETFROMDATA}, and \eqref{E:SOLUTIONCONSTANTONCHARACTERISTICS}. Consider the corresponding continuously extended vectorfield $\Lunit = \partial_t + (2+\soluBurgersClassical)\partial_x$.
\begin{enumerate}
    \item The boundary of $\soluDomainClassical$ satisfies $\p\soluDomainClassical = \initialSingularity\cup\singularBoundary\cup\CauchyHorizon$. \label{pt:boundaryofMGHD}
    \item Let $\{(t,\xSingularBoundary(t)) : t \in [1,\infty)\}$ denote the parametrization of $\initialSingularity\cup \singularBoundary$ from \eqref{eq:def:singularBoundary}. Then $\xSingularBoundary'(t) = 2 + \soluBurgersClassical(t,\xSingularBoundary(t))$. That is, the extended vectorfield $\Lunit$ is tangent to $\initialSingularity\cup\singularBoundary$. \label{pt:tangentOfSingularBoundary}
    \item For $\bar{t}>1$, there is a unique $\bar{x}_0$ such that $\bar{t}=\tSingularBoundaryFromData(\bar{x}_0)$ $=1/\dataBurgers'(\bar{x}_0)$ . Let $\bar{x}=\xSingularBoundaryFromData(\bar{x}_0)$. For $t=\bar{t}$, $x>\bar{x}$ such that $x-\bar{x}$ is small, 
\begin{align}
\soluBurgersClassical(\bar{t},x)
-\soluBurgersClassical(\bar{t},\bar{x})
& = \dataBurgers'(\bar{x}_0) \sqrt{\frac{2 {|\dataBurgers'(\bar{x}_0)|}}{\dataBurgers''(\bar{x}_0)}} (x-\bar{x})^{\frac12}
        +O((x-\bar{x})^{3/4}) \label{eq:expansionNearSingularBoundary}
\end{align}  
\label{pt:expansionNearSingularBoundary}
    \item Fix $t = 1$ and $x = 2$, i.e., the spacetime coordinates of $\initialSingularity$. If $|x - 2|$ is sufficiently small, we have
        \begin{align}
            \soluBurgersClassical(1,x) - \soluBurgersClassical(1,2) = -(3(x - 2))^{1/3} + O((x-2)^{4/9}). \label{E:EXPANSIONNEARS}
        \end{align} \label{PT:EXPANSIONNEARS}
\end{enumerate}   
\end{lemma}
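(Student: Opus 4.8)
The unifying device I would set up first is the time-$t$ characteristic map
\[
 X_t(x_0) := x_0 + t\big(2 + \dataBurgers(x_0)\big), \qquad \soluBurgersClassical(t,x) = \dataBurgers\big(X_t^{-1}(x)\big),
\]
whose local invertibility and the second identity are immediate from \eqref{E:OUTGOINGCHARCURVEFORMULA} and \eqref{E:SOLUTIONCONSTANTONCHARACTERISTICS}. Since $\partial_{x_0}X_t(x_0) = 1 + t\,\dataBurgers'(x_0)$ vanishes exactly when $t = -1/\dataBurgers'(x_0) = \tSingularBoundaryFromData(x_0)$, the failure of invertibility of $X_t$ is precisely the set $\initialSingularity\cup\singularBoundary$; every assertion of the lemma is then a statement about inverting $X_t$, with the order of vanishing of $\partial_{x_0}X_t$ dictating the type of singularity. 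Throughout I will use $\dataBurgers(x)=-\arctan x$, $\dataBurgers'(x)=-(1+x^2)^{-1}$, $\dataBurgers''(x)=2x(1+x^2)^{-2}$, and $\dataBurgers'''(0)=2$.

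For (i), the plan is to exploit that, by Lemma \ref{L:BURGERSSOLUTIONSMOOTHONMGHD}, the straight characteristics $\charcurveL_{x_0}$ foliate $\soluDomainClassical$ without crossing, each extending down to $t=-\infty$ and covering $\{t=0\}\times\Reals$. The future endpoints of the $\BurgersDomainRight$-family occur at $t=1+x_0^2$ and, after reparametrizing by $t$ via \eqref{eq:positionOfSingularBoundary}, trace out exactly $\singularBoundary$; the future endpoints of the $\BurgersDomainLeft$-family occur where $\charcurveL_{x_0}$ meets the line $x=4-2t$ and trace out exactly $\CauchyHorizon$. Both families limit to $\charcurveL_0(1)=(1,2)=\initialSingularity$ as $x_0\to0^{\pm}$. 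Hence the swept-out region is $\Reals^{1+1}$ minus the closed upward wedge with vertex $\initialSingularity$ whose right and left edges are $\singularBoundary$ and $\CauchyHorizon$, and so $\partial\soluDomainClassical=\initialSingularity\cup\singularBoundary\cup\CauchyHorizon$, with connectedness and pairwise disjointness read off from the explicit parametrizations.

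Claim (ii) is a direct computation. Parametrizing $\initialSingularity\cup\singularBoundary$ by $x_0$ through $t=1+x_0^2$ and $x=\xSingularBoundaryFromData(x_0)$, differentiating \eqref{E:XCOORDINATEOFSINGULARSETFROMDATA} makes the $-\arctan x_0$ term contribute $-1$, cancelling the explicit $+1$ and leaving $\frac{d}{dx_0}\xSingularBoundaryFromData(x_0)=2x_0(2-\arctan x_0)$, while $\frac{dt}{dx_0}=2x_0$. Dividing gives $\xSingularBoundary'(t)=2-\arctan x_0=2+\dataBurgers(x_0)=2+\soluBurgersClassical(t,\xSingularBoundary(t))$, i.e.\ tangency of the extended $\Lunit$.

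Claims (iii) and (iv) are the heart of the matter and both reduce to inverting $X_{\bar t}$ near $x_0=\bar x_0$ at the critical time $\bar t=-1/\dataBurgers'(\bar x_0)$, where $\partial_{x_0}X_{\bar t}(\bar x_0)=0$. For (iii) with $\bar x_0>0$ the critical point is nondegenerate, $\partial_{x_0}^2X_{\bar t}(\bar x_0)=\bar t\,\dataBurgers''(\bar x_0)=\dataBurgers''(\bar x_0)/|\dataBurgers'(\bar x_0)|>0$, so Taylor's theorem gives $x-\bar x=\tfrac12\partial_{x_0}^2X_{\bar t}(\bar x_0)(x_0-\bar x_0)^2+O((x_0-\bar x_0)^3)$; inverting on the branch $x_0>\bar x_0$ (the one inside $\BurgersDomainRight$, since $(\bar t,x)$ with $x>\bar x$ forces $\bar t<1+x_0^2$) and substituting into $\dataBurgers(x_0)-\dataBurgers(\bar x_0)=\dataBurgers'(\bar x_0)(x_0-\bar x_0)+O((x_0-\bar x_0)^2)$, whose leading coefficient simplifies via $\bar t=-1/\dataBurgers'(\bar x_0)$ to the stated $\dataBurgers'(\bar x_0)\sqrt{2|\dataBurgers'(\bar x_0)|/\dataBurgers''(\bar x_0)}$, yields \eqref{eq:expansionNearSingularBoundary}. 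For (iv) the new phenomenon is that at $\bar x_0=0$ one has $\dataBurgers''(0)=0$, so the quadratic term drops out and the critical point degenerates to cubic type with $\partial_{x_0}^3X_1(0)=\dataBurgers'''(0)=2\neq0$; equivalently $x-2=x_0-\arctan x_0=\tfrac13x_0^3+O(x_0^5)$. Inverting this cubic gives $x_0=(3(x-2))^{1/3}+O(x-2)$, and $\soluBurgersClassical(1,x)-\soluBurgersClassical(1,2)=-\arctan x_0=-x_0+O(x_0^3)=-(3(x-2))^{1/3}+O(x-2)$, which is \eqref{E:EXPANSIONNEARS}. The hard part will be precisely this transition from a square-root to a cube-root law: one must recognize that the vanishing of $\dataBurgers''$ at $\initialSingularity$ destroys the Morse normal form used for $\singularBoundary$, and then carry enough Taylor orders through the inversion to justify the remainders. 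The honest error terms produced this way are $O(x-\bar x)$ and $O(x-2)$, which are in particular bounded by the (nonsharp) recorded exponents $O((x-\bar x)^{3/4})$ and $O((x-2)^{4/9})$.
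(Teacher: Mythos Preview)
Your proposal is correct and follows essentially the same approach as the paper: identify $\partial\soluDomainClassical$ from the characteristic foliation, verify tangency of $\Lunit$ to $\singularBoundary$ by direct differentiation of the parametrization, and for (iii)--(iv) Taylor-expand the characteristic map $X_{\bar t}$ at its critical point, invert, and substitute into $\dataBurgers(x_0)-\dataBurgers(\bar x_0)$. Your observation that the true error terms are $O(x-\bar x)$ and $O(x-2)$, stronger than the recorded $O((x-\bar x)^{3/4})$ and $O((x-2)^{4/9})$, is correct and a welcome sharpening.
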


\begin{proof}
    Point \eqref{pt:boundaryofMGHD} follows from the definition of the set $\soluDomainClassical$, see \eqref{E:DEF:MGHDASASET}. Point \eqref{pt:tangentOfSingularBoundary} follows from a direct calculation. 

    We now prove point \eqref{pt:expansionNearSingularBoundary}. Let $x_0$ be such  that the characteristic through $(t,x)$ goes through $(0,x_0)$.  Since, $(\bar{t},\bar{x})=(-\frac{1}{\dataBurgers'(\bar{x}_0)},\bar{x})$, it follows from the explicit parametrization of the characteristics \eqref{E:OUTGOINGCHARCURVEFORMULA} that
    \begin{align}
        x-\bar{x}
        ={}& -\frac{\dataBurgers(x_0)-\dataBurgers(\bar{x}_0)}{\dataBurgers'(\bar{x}_0)}
        +(x_0-\bar{x}_0) \notag \\
        ={}& -\frac{1}{\dataBurgers'(\bar{x}_0)}
        \left(\dataBurgers(x_0)-\dataBurgers(\bar{x}_0)-\dataBurgers'(\bar{x}_0)(x-x_0)\right) \notag \\
        ={}& -\frac{1}{\dataBurgers'(\bar{x}_0)}  \frac{1}{2} \dataBurgers''(\bar{x}_0)(x_0-\bar{x}_0)^{2} +O(x_0-\bar{x}_0)^3) . 
        \label{eq:intermediateExpansionNearSingularBoundary}
    \end{align}
    Solving for $x_0-\bar{x}_0$, one finds
    \begin{align}
        x_0-\bar{x}_0  \label{E:XCOORDINATENEARB}
        & = \sqrt{\frac{2|\dataBurgers'(x_0)|}{\dataBurgers''(\bar{x}_0)}} (x-\bar{x})^\frac12
        +O((x-\bar{x})^\frac{3}{4}).
    \end{align}
    Taylor expanding using the definition of the extension to $\singularBoundary$, as well as the definition of $x_0$, we have
    \begin{align}
        \begin{split} \label{E:TAYLOREXPANSIONBURGERSNEARB}
           \soluBurgersClassical(\bar{t},x) - \soluBurgersClassical(\bar{t},\bar{x}) & = \soluBurgersClassical(x_0) - \soluBurgersClassical(\bar{x}_0) \\
           & = \dataBurgers'(\bar{x}_0)(x_0-\bar{x}_0) +O((x_0-\bar{x}_0)^2).
        \end{split}
    \end{align} Inserting \eqref{E:XCOORDINATENEARB} into \eqref{E:TAYLOREXPANSIONBURGERSNEARB} concludes the proof of \eqref{pt:expansionNearSingularBoundary}.

    The proof of \eqref{PT:EXPANSIONNEARS} follows from similar arguments as \eqref{pt:expansionNearSingularBoundary}. Using the same conventions as \eqref{pt:expansionNearSingularBoundary}, we have that $(\bar{t},\bar{x}) = (1,2)$ and $\bar{x}_0 = 0$. Since $\dataBurgers''(0) = 0$, we expand $x - \overline{x}$ as in \eqref{eq:intermediateExpansionNearSingularBoundary} to one order higher and find 

    \begin{align} \label{E:EXPANDINGXNEARS}
        x - 2 = \frac{2}{3!}x_0^3 + O(x_0^4) = \frac{1}{3}x_0^3 + O(x_0^4),
    \end{align}
    where we used that $\dataBurgers'(0) = -1$ and $\dataBurgers'''(0) = 2$ in \eqref{E:EXPANDINGXNEARS}. Inserting $x_0 = (3(x-2))^{1/3} + O((x-2)^{4/9})$, which follows from \eqref{E:EXPANDINGXNEARS}, into the Taylor expansion $\soluBurgersClassical(1,x) - \soluBurgersClassical(1,2) = - x_0 + O(x_0^2)$ concludes the proof of \eqref{PT:EXPANSIONNEARS}.
\end{proof}

The following lemma now describes the weak solution $\soluBurgersWeak$. For $x \neq 0$, it is defined as the function taking the constant value $\dataBurgers(x)$ along the characteristics $\charcurveL_x(t)$ up until just before the point where they intersect the line $t = \tfrac{1}{2} x$. From \eqref{E:OUTGOINGCHARCURVEFORMULA}, it is straightforward to check that this intersection occurs at $-\frac{x}{\dataBurgers(x)}$.

\begin{remark}[Weak solutions to Burgers' equation] \label{R:WEAKSOLUTIONSTOBURGERS}
Before stating the lemma, we mention that the definition of ``weak'' solution to Burgers equation in divergence form $\partial_t \soluBurgers + \frac{1}{2} \partial_x(2+\soluBurgers)^2 = 0$ used below is identical to Def. \ref{D:WEAKSOLUTIONS} with $\soluBurgers$ in place of $\partial_t \solu - 2 \partial_x \solu$. For example, given an open set $\Omega\subset \R^{1+1}$, we say $\soluBurgersWeak \in L^1_{\textnormal{loc}}(\Omega)$ is a weak solution to Burgers' equation if \eqref{E:WEAKSOLUTIONIDENTITY} holds with $\soluBurgers$ in place of $\partial_t \solu - 2 \partial_x \solu$. Similar statements hold for solutions to the Cauchy problem \eqref{E:WEAKSOLUTIONTOIVPIDENTITY} and Oleinik entropy solutions as in \eqref{E:OLEINIKCONDITION}. We omit the repetitive definitions.
\end{remark}

\begin{lemma}[Weak solution of Burgers' equation]
\label{lem:BurgersSolutionWeak}
    Let
    \begin{align}
        \Gamma_{x_0}(t)  
        =   \begin{cases}
                \charcurveL_0(t) & x_0 = 0, \, t\in [0,1], \\
                \charcurveL_{x_0}(t) & x_0 \neq 0, \, t \in \left[0,-\frac{x_0}{\dataBurgers(x_0)}\right),
            \end{cases}
    \end{align}
    where $\charcurveL_{x_0}(t)$ is as in \eqref{E:OUTGOINGCHARCURVEFORMULA}. Define $\soluBurgersWeak$ along $\Gamma_{x_0}(t)$ by 
        \begin{align} \label{E:DEFOFSOLUBURGERSWEAK}
            \soluBurgersWeak \circ \Gamma_{x_0}(t) := \dataBurgers(x_0).
        \end{align}
    Then $\soluBurgersWeak$ is a weak solution to the Cauchy problem \eqref{E:FirstOrderBurgers} on $\R^{1+1}$. 
    
Moreover, this is the unique $L^\infty$ Oleinik entropy solution. Namely, the following condition holds for almost every $x\in\Reals$ and $z,t\in(0,\infty)$:
    \begin{align}
    \label{eq:dropForOleinik}
        \soluBurgersWeak(t,x+z)-\soluBurgersWeak(t,x)
        \leq{}& C\left(1+\frac{1}{t}\right)z .
    \end{align}
    
\end{lemma}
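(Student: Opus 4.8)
The plan is to prove the three assertions in order: (i) that $\soluBurgersWeak$ is a well-defined, single-valued function on $\{t\ge 0\}$; (ii) that it is a weak solution of the Cauchy problem in the sense of Definition \ref{D:WEAKSOLUTIONS}; and (iii) that it satisfies the one-sided bound \eqref{eq:dropForOleinik}, so that the standard uniqueness theory for scalar conservation laws identifies it as \emph{the} unique Oleinik entropy solution. The central device for (i) is to fix $t^\ast\ge 0$ and study the position map $X(x_0):=x_0+t^\ast(2+\dataBurgers(x_0))$, which records where the characteristic $\charcurveL_{x_0}$ sits at time $t^\ast$, restricted to the \emph{alive} characteristics, i.e.\ those with $t^\ast<\tau(x_0)$, where $\tau(x_0):=-x_0/\dataBurgers(x_0)=x_0/\arctan(x_0)$ is the time at which $\charcurveL_{x_0}$ meets the shock $\shockwave=\{x=2t\}$. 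One checks $\tau$ is even and strictly increasing in $|x_0|$, so the dead set $\{\tau\le t^\ast\}$ is an interval $[-x_0^s,x_0^s]$ about the origin, and that $X'(x_0)=1+t^\ast\dataBurgers'(x_0)=1-t^\ast/(1+x_0^2)$. The elementary inequality $\tau(x_0)<\tSingularBoundaryFromData(x_0)=1+x_0^2$ for all $x_0\ne 0$ (the function $\arctan(x_0)(1+x_0^2)-x_0$ vanishes at $0$ with derivative $2x_0\arctan(x_0)>0$) forces every alive characteristic to obey $t^\ast<\tau(x_0)<1+x_0^2$, hence to lie in the region $X'>0$. Thus $X$ is a strictly increasing bijection from $(x_0^s,\infty)$ onto $(2t^\ast,\infty)$ and from $(-\infty,-x_0^s)$ onto $(-\infty,2t^\ast)$ (the two endpoints both mapping to the shock position $2t^\ast$ by the odd symmetry of $\dataBurgers$), while for $t^\ast\le 1$ no characteristic is dead and $X$ is increasing on all of $\Reals$. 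Hence the alive characteristics tile $\{t\ge 0\}$, meeting only along $\shockwave$, so $\soluBurgersWeak$ is single-valued and, by Lemma \ref{L:BURGERSSOLUTIONSMOOTHONMGHD}, a classical solution of $\Lunit\soluBurgers=0$ on each side of $\shockwave$.

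Next I would verify the jump relation and the weak formulation. Because $\dataBurgers=-\arctan$ is odd, the two characteristics through a shock point $(t,2t)$ originate from $\pm x_0$ and carry opposite values $\soluBurgersWeak_R=-\arctan(x_0)$ and $\soluBurgersWeak_L=\arctan(x_0)$, so the Rankine--Hugoniot speed for the flux $\tfrac12(2+\soluBurgers)^2$, namely $\tfrac12\{(2+\soluBurgersWeak_L)+(2+\soluBurgersWeak_R)\}$, equals $2=\tfrac{d}{dt}(2t)$, matching the shock speed. Combined with the fact that $\soluBurgersWeak$ is classical off $\shockwave$, the usual argument (insert $\varphi\in C_c^1$ into \eqref{E:WEAKSOLUTIONIDENTITY}, split the integral across $\shockwave$, integrate by parts on each piece so the bulk terms vanish, and observe that the boundary contributions along $\shockwave$ cancel precisely by Rankine--Hugoniot) shows $\soluBurgersWeak$ is a weak solution; since $\soluBurgersWeak(0,\cdot)=\dataBurgers$ is attained continuously, the $\{t=0\}$ term in \eqref{E:WEAKSOLUTIONTOIVPIDENTITY} is matched as well.

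For the entropy bound and uniqueness the decisive observation is that $\soluBurgersWeak(t,\cdot)$ is \emph{monotone decreasing} for every $t$: on each smooth side $\partial_x\soluBurgersWeak=\dataBurgers'(x_0)/(1+t\dataBurgers'(x_0))<0$, since $1+t\dataBurgers'(x_0)>0$ on alive characteristics (as $t<\tau(x_0)<1+x_0^2=-1/\dataBurgers'(x_0)$), and the jump across $\shockwave$ is downward because $\soluBurgersWeak_L>0>\soluBurgersWeak_R$. Therefore $\soluBurgersWeak(t,x+z)-\soluBurgersWeak(t,x)\le 0\le C(1+\tfrac1t)z$ for $z>0$, so \eqref{eq:dropForOleinik} holds trivially; together with $\|\soluBurgersWeak\|_{L^\infty}\le \tfrac\pi2$ this places $\soluBurgersWeak$ in the Oleinik class. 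Since the flux $\tfrac12(2+\soluBurgers)^2$ is uniformly convex, the standard uniqueness theory for entropy solutions of scalar conservation laws (e.g.\ \cite{Bressan2013,cDafermos}) admits at most one such solution, so $\soluBurgersWeak$ is the unique $L^\infty$ Oleinik entropy solution.

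I expect the main obstacle to be step (i), the global single-valuedness. Lemma \ref{L:BURGERSSOLUTIONSMOOTHONMGHD} only rules out crossings inside the MGHD $\soluDomainClassical$, whereas $\soluBurgersWeak$ continues the left-moving characteristics \emph{past} the Cauchy horizon $\CauchyHorizon$ all the way to $\shockwave$, into a region that lemma does not address. The monotonicity of $X$ on the alive set, which rests entirely on the inequality $\tau(x_0)<1+x_0^2$, is what rescues the argument there; some care is also needed to confirm that no characteristic from the opposite side intrudes, i.e.\ that alive right-characteristics remain in $\{x>2t\}$ and alive left-characteristics in $\{x<2t\}$, which follows from the same bijection onto the disjoint ranges $(2t^\ast,\infty)$ and $(-\infty,2t^\ast)$.
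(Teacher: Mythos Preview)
Your proposal is correct and follows essentially the same route as the paper: both hinge on the inequality $\tau(x_0)=x_0/\arctan(x_0)<1+x_0^2$ (proved via the same derivative computation $\frac{d}{dx_0}[\arctan(x_0)(1+x_0^2)-x_0]=2x_0\arctan(x_0)$) to rule out characteristic crossings before $\shockwave$, both verify Rankine--Hugoniot from the odd symmetry of $\arctan$, and both obtain the Oleinik bound from the monotonic decrease of $\soluBurgersWeak(t,\cdot)$. Your packaging via the position map $X(x_0)=x_0+t^\ast(2+\dataBurgers(x_0))$ and the alive/dead decomposition is slightly cleaner than the paper's, which for $x_0<0$ merely invokes ``a convexity argument similar to that in Lemma~\ref{L:BURGERSSOLUTIONSMOOTHONMGHD}'' without writing it out; your observation that $X'>0$ on the entire alive set handles both signs of $x_0$ uniformly.
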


\begin{proof}
    A small number of calculations are required to show that $\soluBurgersWeak$ is smooth away from $\shockwave$. Each characteristic $\Gamma_{x_0}(t)$ terminates at $t=-x_0/\dataBurgers(x_0)$, where 
    $x=x_0+t(2+\dataBurgers(x_0))$
    $=x_0 +2t -2(x_0/\dataBurgers(x_0))\dataBurgers(x_0)$
    $=2t$. Hence, each $\Gamma_{x_0}$ terminates on $\shockwave$ as in \eqref{eq:def:discontinuousshock}. For $x_0$, the characteristic $\gamma_{x_0}$ from Lemma \ref{L:BURGERSSOLUTIONSMOOTHONMGHD} are line segments of the same line, but terminating at $t=-1/\dataBurgers'(x_0)$ $=1+x_0^2$. The difference between these termination times is $1+x_0^2-x_0/\arctan(x_0)$, which has the same sign as
    $\arctan(x_0)(1+x_0^2)-x_0$, which vanishes at $x_0$ and has derivative $1+2x_0\arctan(x_0)-1$ $=2x_0\arctan(x_0)\geq 0$. Thus, the line segments $\Gamma_{x_0}$ terminate on $\shockwave$ before reaching the singular boundary $\singularBoundary$. Similarly, for $x_0<0$, a convexity argument similar to that in Lemma \ref{L:BURGERSSOLUTIONSMOOTHONMGHD} can be used to show the line segments $\gamma_{x_0}$ can be extended to $t=-1/\dataBurgers'(x_0)$ without intersecting each other, and these line segments reach $\shockwave$ at $t=-x_0/\dataBurgers(x_0)>0$, before reaching $t=-1/\dataBurgers'(x_0)$. Since the characteristic segments $\Gamma_{x_0}$ never intersect each other, except at $\shockwave$, it follows that $\soluBurgersWeak$ is smooth away from $\shockwave$. 
    
    To show that $\soluBurgersWeak$ is a weak solution, it remains to show that it satisfies the Rankine-Hugoniot condition. Observe that the $x$ coordinate along $\gamma_{x_0}$ is $x_0+t(2+\dataBurgers(x_0)$, which, when it reaches $\shockwave$ at $t=-x_0/\dataBurgers(x_0)$ is 
    $x_0-2x_0/\dataBurgers(x_0)-x_0$
    $=-2x_0\dataBurgers(x_0)$ is even in $x_0$. Thus, the mean of the left and right limits of $2+\soluBurgersWeak$ is $((2+\soluBurgers(-x_0))+(2+\soluBurgers(x_0)))=2$, which is the slope of $\shockwave$. Thus, from the discussion in \cite[Section 3.4]{Evans}, $\soluBurgersWeak$ satisfies the Rankine-Hugoniot condition for Burgers' equation and is a weak solution in $\Reals^2$. 

    Since $\dataBurgers$ is negative for $x_0>0$ and positive for $x_0<0$, it follows that $\soluBurgersWeak$ decreases across $\shockwave$, and it is smooth (and also decreasing in $x$) elsewhere, so it satisfies the condtion \eqref{eq:dropForOleinik}. Thus, by \cite[Section 3.4, Theorem 3]{Evans}, it is the unique weak solution on $\Reals^2$ satisfying the Oleinik entropy condition.
\end{proof}

\section{Proof of the main theorems}
\label{s:pfMainThm}

In this section, the main Theorems \ref{thm:main:MGHD} and \ref{T:UNIQUEWEAKSOLUTIONS} are proved by constructing classical and weak solutions to the wave equation \eqref{E:QNLW:INTRO} whose $\uLunit$ derivative as in \eqref{E:DOUBLENULLFRAME} are the classical and weak solutions constructed in Lemmas \ref{L:BURGERSSOLUTIONSMOOTHONMGHD}-\ref{lem:BurgersSolutionWeak}. In other words, since Burgers' equation only has one characteristic speed, a second characteristic speed of $-2$ in the direction of $\uLunit$ is added to construct the desired wave equation to prove the main theorems \ref{thm:main:MGHD}-\ref{T:UNIQUEWEAKSOLUTIONS}. The classical and weak solutions for the wave equation are constructed from the classical and weak solutions of Burgers' equation in Lemmas \ref{L:BURGERSSOLUTIONSMOOTHONMGHD}-\ref{lem:BurgersSolutionWeak}.

\subsection{Lemma on integrals of Burgers' solutions}
\label{ss:integralsOfBurgers}
This section proves a lemma that will be used to construct a weak solution of the wave equation from the weak solution of Burgers' equation.

\begin{lemma}[Explicit formula for integrals of $\soluBurgersWeak$]
\label{lem:explicitFormulaForIntegralsOfSoluBurgers}
Let $\soluBurgersWeak$ be the weak entropy solution of Burgers' equation from Lemma \ref{lem:BurgersSolutionWeak} and define
\begin{align}
\soluWeak(t,x)={}& \frac12 \int_{y=x}^{x+2t} \soluBurgersWeak\left(t+\frac12(x-y),y\right) \di y .
\label{eq:soluWaveAsIntegral}
\end{align}

It follows that, for $t\leq\max\left(\frac{x}{2},2-\frac{x}{2}\right)$,
\begin{subequations}
\begin{align}
\partial_x\soluWeak(t,x)
={}&\ln\frac{4+\dataBurgers(x+2t)}{4+\soluBurgersWeak(t,x)}
-\frac12 \soluBurgersWeak(t,x) + \frac12 \soluBurgersWeak(0,t+2x)
\label{eq:partialxSoluWaveExplicit:below}
\end{align}
and for $t>\max\left(\frac{x}{2},2-\frac{x}{2}\right)$,
\begin{align}
\partial_x\soluWeak(t,x)
={}&\ln\frac{4+\dataBurgers(x+2t)}{4+\soluBurgersWeak(t,x)}
-\frac12 \soluBurgersWeak(t,x) + \frac12 \soluBurgersWeak(0,t+2x)\nonumber\\
& + \lim_{\epsilon\rightarrow0^+}\ln\frac{4+\soluBurgersWeak(x/4+t/2+\frac{\epsilon}{2},x/2+t-\epsilon)}{4+\soluBurgersWeak(x/4+t/2-\frac{\epsilon}{2},x/2+t+\epsilon)}
\label{eq:partialxSoluWaveExplicit:above}
\end{align}
\label{eq:partialxSoluWaveExplicit}
\end{subequations}
 For $(t,x)\not\in \shockwave$, 
\begin{align}
\uLunit \soluWeak(t,x) = \partial_t\soluWeak(t,x)
-2\partial_x\soluWeak(t,x) =\soluBurgersWeak(t,x).
\label{eq:partialtSoluWaveExplicit} 
\end{align}

In particular, for $x<2$ and $\epsilon>0$ small, 
\begin{align}
\partial_x\soluWeak\left(2-\frac{x}{2}+\epsilon,x\right)-\partial_x\soluWeak\left(2-\frac{x}{2},x\right)
\sim{}& (6\epsilon)^{1/2}.
\label{eq:partialxSoluWaveExplicit:roughness}
\end{align} 
\end{lemma}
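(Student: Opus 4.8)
The plan is to read off the roughness of $\partial_x\soluWeak$ at the Cauchy horizon by differencing the two explicit expressions \eqref{eq:partialxSoluWaveExplicit:above} and \eqref{eq:partialxSoluWaveExplicit:below}, which I take as already established earlier in the lemma (they would be obtained by differentiating the integral representation \eqref{eq:soluWaveAsIntegral} in $x$, picking up the boundary terms at $y=x$ and $y=x+2t$, converting the interior $\partial_t\soluBurgersWeak$ into $\partial_x\soluBurgersWeak$ via Burgers' equation, and changing variables to the characteristic label; the extra term in \eqref{eq:partialxSoluWaveExplicit:above} arising exactly when the ingoing characteristic through $(t,x)$ meets the shock $\shockwave$). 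The first observation is that for $x<2$ the threshold $t=2-\tfrac{x}{2}$ is precisely the Cauchy horizon $\CauchyHorizon$, since $\xRegularBoundary(t)=4-2t$, and that the point $(2-\tfrac{x}{2},x)$ lies strictly to the left of $\shockwave=\{x=2t\}$ because $x<2<4-x$. Hence $\soluBurgersWeak$ is smooth in a full neighborhood of this point, so every term in \eqref{eq:partialxSoluWaveExplicit:below}, namely $\ln\frac{4+\dataBurgers(x+2t)}{4+\soluBurgersWeak(t,x)}-\tfrac12\soluBurgersWeak(t,x)+\tfrac12\soluBurgersWeak(0,t+2x)$, is a $C^1$ (indeed smooth) function of $t$ near $t=2-\tfrac{x}{2}$ and contributes only an $O(\epsilon)$ difference. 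Therefore the entire leading-order discrepancy must come from the extra term in \eqref{eq:partialxSoluWaveExplicit:above}, which switches on exactly for $t>2-\tfrac{x}{2}$.

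The next step is to locate the shock crossing governing that extra term. It evaluates the jump of $\soluBurgersWeak$ across $\shockwave$ at the point $\big(\tfrac{x}{4}+\tfrac{t}{2},\tfrac{x}{2}+t\big)$ where the ingoing characteristic through $(t,x)$ meets $\shockwave$. Substituting $t=2-\tfrac{x}{2}+\epsilon$ collapses this to $\big(1+\tfrac{\epsilon}{2},\,2+\epsilon\big)$, which converges to the initial singularity $\initialSingularity=(1,2)$ as $\epsilon\to0^{+}$. Thus the whole problem reduces to quantifying how fast the shock strength vanishes as one approaches $\initialSingularity$ along $\shockwave$.

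I would then compute that shock strength using the structure from Lemma \ref{lem:BurgersSolutionWeak}. The shock at the crossing point is formed by the two outgoing characteristics emanating from $\pm\xi$ (with $\xi>0$), which meet on $\shockwave$ at time $t^{*}(\xi)=-\xi/\dataBurgers(\xi)=\xi/\arctan\xi$, and the left and right limits of $\soluBurgersWeak$ there are $\dataBurgers(-\xi)=\arctan\xi$ and $\dataBurgers(\xi)=-\arctan\xi$ respectively (the characteristic from $-\xi$ sits to the left of $\shockwave$ just before absorption). Matching $t^{*}(\xi)=1+\tfrac{\epsilon}{2}$ and Taylor expanding $\xi/\arctan\xi=1+\tfrac{\xi^2}{3}+O(\xi^4)$ gives $\xi=\sqrt{\tfrac{3\epsilon}{2}}\,(1+o(1))$. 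Inserting the left/right limits into the extra logarithmic term and linearizing yields
\[
\ln\frac{4+\arctan\xi}{4-\arctan\xi}=\frac{\arctan\xi}{2}+O(\xi^3)=\frac{\xi}{2}+O(\xi^3)=\frac14(6\epsilon)^{1/2}+o(\epsilon^{1/2}),
\]
which together with the $O(\epsilon)$ bound on the smooth part from \eqref{eq:partialxSoluWaveExplicit:below} gives a difference of order $(6\epsilon)^{1/2}$, establishing \eqref{eq:partialxSoluWaveExplicit:roughness} (here $\sim$ denotes comparability up to a fixed positive constant, the essential content being the exponent $\tfrac12$).

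The step I expect to be the main obstacle is pinning down the $\epsilon^{1/2}$ scaling itself, which is produced entirely by the degenerate vanishing of the shock strength at $\initialSingularity$: the key expansion $t^{*}(\xi)-1\sim\tfrac{\xi^2}{3}$ (equivalently, the odd symmetry $\dataBurgers(-\xi)=-\dataBurgers(\xi)$ together with $\dataBurgers''(0)=0$, so that the two colliding characteristics leave at nearly equal and opposite slopes) forces $\xi\sim\epsilon^{1/2}$ and hence a jump $\sim\xi\sim\epsilon^{1/2}$. Getting the exponent correct therefore hinges on tracking the right order in the expansion of $t^{*}(\xi)$ and confirming, as above, that the smooth remainder is genuinely lower order $O(\epsilon)$ so that the square-root term dominates; this is the analogue on the shock side of the cusp/Hölder structure at $\initialSingularity$ recorded in Lemma \ref{lem:topologyGeometryOfSingularity}.
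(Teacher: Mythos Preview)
Your proposal is correct and follows the same approach as the paper: isolate the difference as the extra jump term in \eqref{eq:partialxSoluWaveExplicit:above}, observe the smooth part contributes only $O(\epsilon)$, locate the shock crossing at $(1+\tfrac{\epsilon}{2},2+\epsilon)$, and read off the $\epsilon^{1/2}$ scaling of the jump near $\initialSingularity$. The paper's proof is terser---it simply cites Lemma~\ref{lem:singularBehaviourOfBurgersAtSingularBoundary} for the $(t-2+x/2)^{1/2}$ vanishing---whereas you compute the jump directly from the $\pm\xi$ characteristic structure of Lemma~\ref{lem:BurgersSolutionWeak} via $t^{*}(\xi)=\xi/\arctan\xi=1+\tfrac{\xi^2}{3}+O(\xi^4)$; this is a cleaner and more self-contained route to the same exponent. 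One minor point: in your parenthetical sketch of how \eqref{eq:partialxSoluWaveExplicit} is derived, the paper does not ``convert $\partial_t\soluBurgersWeak$ into $\partial_x\soluBurgersWeak$'' but rather writes $\partial_x\soluBurgersWeak=\tfrac{1}{4+\soluBurgersWeak}(\Lunit-\uLunit)\soluBurgersWeak=-\tfrac{1}{4+\soluBurgersWeak}\uLunit\soluBurgersWeak$ and integrates $\uLunit\ln(4+\soluBurgersWeak)$ along the $\uLunit$-line by the fundamental theorem of calculus; this is what produces the logarithms directly and is worth stating precisely.
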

\begin{proof}
Consider first the two regions considered in the statement of the lemma. Observe that $\shockwave$ is the subset of the line $t=x/2$ where $t > 1$, that $\CauchyHorizon$ is the subset of the line $t=2-x/2$ where $t> 1$, and that these two line segments intersect at $\initialSingularity$ where $t=1$ and $x=2$. Thus, the condition that $t\leq\max\left(\frac{x}{2},2-\frac{x}{2}\right)$ is the condition that the point $(t,x)$ lies beneath at least one of $\shockwave$ and $\CauchyHorizon$. 

For the readers convenience, recall \eqref{E:PARTIALXINTERMSOFDOUBLENULLFRAME}--\eqref{E:PARTIALTINTERMSOFDOUBLENULLFRAME}, which expresses the coordinate partial derivatives $\{\partial_t,\partial_x\}$ in terms of the $\gfour$-null frame $\{\Lunit,\uLunit\}$:
\begin{align}
\partial_x ={}& \frac{1}{4+\soluBurgers}(\Lunit -\uLunit), &
\partial_t ={}& \frac{2}{4+\soluBurgers}\Lunit
+\frac{2+\soluBurgers}{4+\soluBurgers}\uLunit .\label{eq:partialxPartialtInTermsOfNullVectors} 
\end{align}

Consider the value of $\soluWeak$ at a point $(t,x)=(t_0,x_0)$ in the case $t_0>\max(x_0/2,2-x_0/2)$. The other case follows similarly. In the case $t_0>\max(x_0/2,2-x_0/2)$, the characteristic curve of   $\uLunit = \partial_t - 2 \partial_x$ through the point $(t_0,x_0)$ is given by $t=(x_0/2+t_0)-x/2$. In particular, it crosses  $\shockwave$, given by $t=x/2$, when $x/2=t=(x_0/2+t_0)-x/2$, i.e. $x=x_0/2+t_0$. It crosses the $x$-axis when $0=t=(x_0/2+t_0)-x/2$, i.e. $x=x_0+2t_0$. The formula for $\solu$ can be rewritten as
\begin{align}
\soluWeak(t_0,x_0)
={}& \frac12 \int_{x_0}^{x_0/2+t_0} \soluBurgersWeak\left(t_0+\frac12(x_0-x),x\right) \di x \nonumber\\
 &+\frac12 \int_{x_0/2+t_0}^{x_0+2t_0} \soluBurgersWeak\left(t_0+\frac12(x_0-x),x\right) \di x .
\end{align}
For $t_0>\max(x_0/2,2-x_0/2)$, there is an open set around the right line segment $\{(t_0+x_0/2-x/2,x): x\in[\frac{x_0}{2} + t_0,x_0+2t_0]\}$ on which $\soluBurgersWeak$ can be extended smoothly (as $\soluBurgersClassical$). Because $\soluBurgersClassical$ is $C^2$, the difference quotients $(\soluBurgersClassical(t,x+h)-\soluBurgersClassical(t,x))/h$ are uniformly bounded on a neighbourhood and converge pointwise to $\partial_x\soluBurgersClassical(t,x)$, which coincides with $\partial_x\soluBurgersWeak(t,x)$, except at $t= \frac{x}{2}$, where the latter is undefined. Thus, the partial derivative can be passed through the integral. A similar argument applies on the left line segment $\{(t_0+x_0/2-x/2,x): x\in[x_0,\frac{x_0}{2}+t_0]\}$. 

Since $\Lunit \soluBurgersWeak = 0$ holds pointwise everywhere except for the line $t = \frac{x}{2}$, the following holds from the equation for $\partial_x$ in the equations \eqref{eq:partialxPartialtInTermsOfNullVectors}: 

\begin{align}
\partial_x\soluWeak(t_0,x_0)
={}& \frac12 \int_{x_0}^{x_0/2+t_0} \partial_x\soluBurgersWeak\left(t_0+\frac12(x_0-x),x\right) \di x
-\frac12 \soluBurgersWeak(t_0,x_0) \nonumber\\
& + \frac12 \int_{x_0/2+t_0}^{x_0+2t_0} \partial_x\soluBurgersWeak\left(t_0+\frac12(x_0-x),x\right) \di x
+\frac12 \soluBurgersWeak(0,t_0+2x_0)\\
={}&  \int_{x_0}^{x_0/2+t_0} \frac{1}{4+\soluBurgersWeak}\vecLbar\soluBurgersWeak\left(t_0+\frac12(x_0-x),x\right) \di x 
-\frac12 \soluBurgersWeak(t_0,x_0)\nonumber\\
 & + \int_{x_0/2+t_0}^{x_0+2t_0} \frac{1}{4+\soluBurgersWeak}\vecLbar\soluBurgersWeak\left(t_0+\frac12(x_0-x),x\right) \di x 
+\frac12 \soluBurgersWeak(0,t_0+2x_0)\\
 ={}& \frac12 \int_{x_0}^{x_0/2+t_0} \vecLbar\ln(4+\soluBurgersWeak)\left(t_0+\frac12(x_0-x),x\right) \di x 
-\frac12 \soluBurgersWeak(t_0,x_0)\nonumber\\
 & +\frac12\int_{x_0/2+t_0}^{x_0+2t_0} \vecLbar\ln(4+\soluBurgersWeak)\left(t_0+\frac12(x_0-x),x\right) \di x 
+\frac12 \soluBurgersWeak(0,t_0+2x_0)
\end{align}

Since $\vecLbar$ is tangent to the line of integration, the integrals can be evaluated by the fundamental theorem of calculus as
\begin{align}
\partial_x\soluWeak(t,x)
={}& \lim_{\epsilon\rightarrow0^+}\left(
\ln(4+\soluBurgersWeak)|_{x_0}^{x_0/2+t_0-\epsilon}
+\ln(4+\soluBurgersWeak)|_{x_0/2+t_0+\epsilon}^{x_0+2t_0}
\right) \nonumber\\
&-\frac12 \soluBurgersWeak(t_0,x_0)  +\frac12 \soluBurgersWeak(0,t_0+2x_0)\\
={}& \ln\frac{4+\soluBurgersWeak(0,x_0+2t_0)}{4+\soluBurgersWeak(t_0,x_0)}
+ \lim_{\epsilon\rightarrow0^+}\ln\frac{4+\soluBurgersWeak(x_0/4+t_0/2+\frac{\epsilon}{2},x_0/2+t_0-\epsilon)}{4+\soluBurgersWeak(x_0/4+t_0/2-\frac{\epsilon}{2},x_0/2+t_0+\epsilon)} \nonumber\\
&-\frac12 \soluBurgersWeak(t_0,x_0)  +\frac12 \soluBurgersWeak(0,t_0+2x_0).
\end{align}
For $t \leq\max(x_0/2,2-x_0/2)$, the continuity of $\soluBurgersClassical$ at $(x_0/4+t_0/2,x_0/2+t_0)$ means that the second term is absent. This completes the proof of formula \eqref{eq:partialxSoluWaveExplicit} except on the half line $\{(2-x/2,x): x<0\}$. 

Equation \eqref{eq:partialtSoluWaveExplicit} follows from a similar argument, using the equation for $\partial_t$ in the equations \eqref{eq:partialxPartialtInTermsOfNullVectors}.

It remains to prove equations \eqref{eq:partialxSoluWaveExplicit}-\eqref{eq:partialtSoluWaveExplicit} on the half line $\{(2-x/2,x):x<0\}$. Because the integrand in equation \eqref{eq:soluWaveAsIntegral} is uniformly bounded and pointwise convergent in $t$ (except at $x=2t$ for $t>1$), it follows that $\soluWeak$ is continuous. For $x<2t$ fixed, the partial derivative $\partial_t\soluWeak$ is continuous for $t\not=2-x/2$, the two one-sided limits of $\partial_t\soluWeak$ exist and are equal at $t=2-x/2$. Thus, by the fundamental theorem of calculus, the common value is the derivative at $t=2-x/2$. Similarly, the common value of the one-sided limits of $\partial_x\soluWeak$ give the value on the half line $\{(2-x/2,x):x<2\}$. 

For $t_0 > 2-\frac{x_0}{2}$, the discontinuity of $\soluBurgersWeak$ at $(x_0/4+t_0/2,x_0/2+t_0)$ means that the term involving the limit $\epsilon\rightarrow0^+$ in equations \eqref{eq:partialxSoluWaveExplicit:above} is nonzero. In the limit $t\rightarrow 2-x/2$, the term involving $\epsilon\rightarrow0^+$ vanishes as $(t-2+x/2)^{1/2}$ when $t-2+x/2$ is small by Lemma \ref{lem:singularBehaviourOfBurgersAtSingularBoundary} and the equality of $\soluBurgersClassical$ and $\soluBurgersWeak$ when $t<\max(x_0/2,2-x_0/2)$, which proves the remaining claim. 
\end{proof}

\subsection{Proof of Theorem \ref{thm:main:MGHD}}
\label{ss:proofOfThmMGHD}

This section proves Theorem \ref{thm:main:MGHD}. Corollary \ref{cor:causalBubbles} will be proved in Section \ref{SS:CAUSALBUBBLES}.

\begin{proof}[Proof of Theorem \ref{thm:main:MGHD}]
Recall $\soluBurgersClassical$ defined in Lemma \ref{L:BURGERSSOLUTIONSMOOTHONMGHD}. Define, for $(t,x)\in\soluDomainClassical$, 
\begin{align}
\soluClassical(t,x) :={}& \frac12 \int_{y=x}^{x+2t} \soluBurgersClassical\left(t+\frac12(x-y),y\right) \di y .
\label{eq:soluWaveAsIntegral:Classical}
\end{align}
Since $\soluBurgersClassical$ is smooth, so is $\soluClassical$. Differentiating with respect $\partial_t-2\partial_x$, one finds 
\begin{align}
\partial_t\soluClassical-2\partial_x\soluClassical={}&\soluBurgersClassical .
\end{align}
Since $\soluBurgersClassical$ is a solution of Burgers' equation, from the same argument as in equation \eqref{E:QNLWINDIVERGENCEFORM}, one finds $\soluClassical$ is a classical solution of the wave equation \eqref{E:QNLW:INTRO}. Furthermore, by definition $\soluClassical(0,x)=0$, and, since $\partial_x\soluClassical(0,x)=0$, we find $\partial_t\soluClassical(0,x)$ $=\soluBurgersClassical(0,x)$ $=-\arctan(x)$, so $\soluClassical$ satisfies the initial condition \eqref{E:QNLW:DATA:INTRO}. Since $\soluBurgersClassical$ is constant along characteristics, its values remain in $(-\pi/2,\pi/2)$. The acoustical metric remains Lorentzian as long $\soluBurgersClassical$ is not $2\not\in[-\pi/2,\pi/2]$. This proves the claim in point \eqref{pt:classicalProperies:existence}. 

We now prove the uniqueness property of the classical solution and its MGHD in point \eqref{pt:classicalProperties:MGHDuniqueness}. In Lemma \ref{lem:topologyGeometryOfSingularity}, it was already observed that $\boundary\soluDomainClassical=\CauchyHorizon\cup\initialSingularity\cup\singularBoundary$. Both $\singularBoundary$ and $\CauchyHorizon$ can be viewed as curves with domain $t\in(1,\infty)$. The characteristics have speed either $-2$ or $\soluBurgersClassical$, so any timelike inextendible curve must be parametrised by $t$ and continue until it reaches $\boundary\soluDomainClassical$; in particular, it must cross $t=0$. From our definition of a classical solution, it follows that $(\soluDomainClassical,\gfour)$ is globally hyperbolic with $\hsfcData$ as a Cauchy hypersurface.

Suppose there is a globally hyperbolic extension. This supposed extension would have to include a neighbourhood of a point either on $\CauchyHorizon$ or $\initialSingularity\cup\singularBoundary$. However, from the behaviour of $\soluBurgersClassical(1,x)$ $=\soluBurgersWeak(1,x)$ for small $|x-2|$ in equation \eqref{E:EXPANSIONNEARS}
and of $\soluBurgersClassical(t,x)$ for $t>1$ and $x>\xSingularBoundary(t)$ in equation \eqref{eq:expansionNearSingularBoundary}, we see that $\soluBurgersClassical$ fails to extend as a $C^1$ solution to any point on $\initialSingularity\cup\singularBoundary$, so $\soluClassical$ can't be  extended in $C^2$ to a neighbourhood of a point on $\initialSingularity\cup\singularBoundary$. If there were an extension to a neighbourhood of a point on $\CauchyHorizon$, then at any point on $\CauchyHorizon$, the metric given by $\gfour$ would be continuous, so $-2$ would remain a characteristic speed, and there would be a backward causal curve following $\CauchyHorizon$ back either to a point in the boundary of the domain of the extension of $\soluClassical$ or to $\initialSingularity=(1,0)$, either of which contradicts the global hyperbolicity of the domain of the extension, and, hence, contradicts our definition of classical solution. Thus, there can be no globally hyperbolic extension, and $(\soluDomainClassical,\soluClassical)$ is a maximal globally hyperbolic development. Since the boundary of $\soluDomainClassical$ is a piecewise $C^1$ curve (hence continuous), with $\soluDomainClassical$ lying on one side and its complement on the other, by \cite[Theorem 4.82]{EperonReallSbierski}, it follows that $(\soluDomainClassical,\soluClassical)$ is the unique maximal globally hyperbolic development. This proves point \eqref{pt:classicalProperties:MGHDuniqueness}. 

We now prove the statements concerning the boundary of the MGHD in point \eqref{pt:classicalProperties:characterisationOfTheBoundary} of Theorem \ref{thm:main:MGHD}. 

We've already seen that $\boundary\soluDomainClassical=\CauchyHorizon\cup\initialSingularity\cup\singularBoundary$, and that $\CauchyHorizon$, $\initialSingularity$, and $\singularBoundary$ are disjoint be definition, which proves \eqref{pt:classicalProperties:singularAndRegularBoundary}. In passing, we note that it was already shown that $\soluClassical$ fails to be $C^2$ at any point on $\initialSingularity\cup\singularBoundary$ and that, from Lemma \ref{lem:explicitFormulaForIntegralsOfSoluBurgers}, since $\soluClassical=\soluWeak$ on $\soluDomainAgree$, it follows that the partial derivatives of $\soluClassical$ extend smoothly to $\CauchyHorizon$ (and indeed, can be extended smoothly through $\CauchyHorizon$ by extending the formula given beneath $\CauchyHorizon$).

Equation \eqref{E:PARAMETRIZEDSINGULARBOUNDARY} is simply a reparametrization of the equation \eqref{eq:def:singularBoundary} which proves point \eqref{pt:classicalProperties:parametrizeSingularBoundary}. 

Both $\CauchyHorizon$ and $\singularBoundary$ are continuous curves with limit as $t\searrow0$ being $\initialSingularity=(1,0)$, which proves point \eqref{pt:classicalProperties:emanateFromInitialSingularity}. 
At $\CauchyHorizon$, from equations \eqref{eq:partialxSoluWaveExplicit:below} and \eqref{eq:partialtSoluWaveExplicit}, for $t<1-x/2$, $\soluWeak=\soluClassical$ have derivatives that extend smoothly to $\CauchyHorizon$ from below, which proves point \eqref{pt:classicalProperties:smoothAtCauchyHorizon}. 
That $\soluClassical$ extends as a  $C^{1,1/3}$ and $C^{1,1/2}$ function respectively to $\initialSingularity$ and $\singularBoundary$ follows
from Lemma \ref{lem:topologyGeometryOfSingularity} (particularly equations \eqref{eq:expansionNearSingularBoundary}--\eqref{E:EXPANSIONNEARS}), as well as $\Lunit \soluBurgersClassical = 0$ and computations for derivatives of $\soluClassical$ analogous to Lemma \ref{lem:explicitFormulaForIntegralsOfSoluBurgers}. These asymptotics in fact show that $\soluClassical$ cannot be better $C^{1,1/2}$ and $C^{1,1/3}$ at $\singularBoundary$ and $\initialSingularity$ respectively. In particular, the second derivatives must diverge sequentially, which concludes the point \eqref{pt:classicalProperties:regularityAtSingularBoundary}.

The definitions of the acoustical metric $\gfour$ as in \eqref{EqAcMetric} and the vectorfield $\vecL$ as in \eqref{E:DOUBLENULLFRAME} imply that they have the same regularity as $\soluBurgersClassical$ and hence also extend to $\initialSingularity\cup\singularBoundary$. We can then measure the length of the extended $\Lunit$ with respect to the extended $\gfour$ to be $\gfour(\Lunit,\Lunit) = 0$, which is an immediate consequence of \eqref{E:INVERSEMETRICINTERMSOFDOUBLENULLFRAME}. We have therefore proved point \eqref{pt:classicalProperties:regularityOfMetric}. 

Using the extended metric $\gfour$ from point \eqref{pt:classicalProperties:regularityOfMetric} to $\CauchyHorizon$, we can also measure the $\gfour$-length to of $\uLunit$, which is a smooth vectorfield on all of $\mathbb{R}^{1+1}$, to be $\gfour(\uLunit,\uLunit) = 0$. Since $\uLunit = \partial_t -2\partial_x$ is tangent to $\CauchyHorizon$ (see \eqref{eq:def:cauchyhorizon}), it follows that $\CauchyHorizon$ is $\gfour$-null. That the extended $\Lunit$ is tangent to $\singularBoundary$ follows immediately from  Lemma \ref{lem:topologyGeometryOfSingularity}, point \eqref{pt:tangentOfSingularBoundary}. This concludes the proof of point \eqref{pt:classicalProperties:boundaryIsNull}.

Finally, point \eqref{PT:NONUNIQUEINTEGRALCURVES} follows from the fact that, at any point $p \in \singularBoundary$, the extended $\Lunit$ is tangent to it. Hence, the backwards ODE initial value problem \eqref{E:ODEIVPFOREXTENDEDL} can be solved by going backwards along $\singularBoundary$ or backwards along the characteristic from within $\soluDomainClassical$ which was extended to terminate at $p \in \singularBoundary$.

\end{proof}

\subsection{Proof of Theorem \ref{T:UNIQUEWEAKSOLUTIONS}}
\label{ss:proofOfUniqueWeakSolution}

\begin{proof}[Proof of Theorem \ref{T:UNIQUEWEAKSOLUTIONS}]

Let $\soluBurgersWeak$ be the unique Oleinik entropy solution of the IVP \eqref{E:FirstOrderBurgers} to Burgers' equation constructed in Lemma \ref{lem:BurgersSolutionWeak}. Let $\soluWeak$ be defined by \eqref{eq:soluWaveAsIntegral}. Then, that $\soluWeak$ is a weak solution of the quasilinear wave equation \eqref{E:QNLWINDIVERGENCEFORM} is equivalent to $\soluBurgersWeak$ being a weak solution of Burgers' equation, which was proved in Lemma \ref{lem:BurgersSolutionWeak}, see also Remark \ref{R:WEAKSOLUTIONSTOBURGERS}. Furthermore, the uniqueness in the class of Oleinik entropy solutions in Definition \ref{D:CLASSOFENTROPYSOLUTIONS} is equivalent to $\soluBurgers$ being a weak solution in the class considered in Lemma \ref{lem:BurgersSolutionWeak}. This proves point \eqref{pt:weakSolution:existenceUniqueness}.

The classical Burgers' solution $\soluBurgersClassical$ and weak Burgers' solution $\soluBurgersWeak$ differ in the region $\soluDomainClassical\backslash\soluDomainAgree$ $=\BurgersDomainRight\backslash\Omega_3$ bounded from below by the shock $\shockwave$ and above by the singular boundary $\singularBoundary$. In fact, $\soluBurgersClassical<\soluBurgersWeak$ everywhere in this region. Since $\soluClassical$ and $\soluWeak$ are defined by integrating along the lines of $\vecLbar$, and the $\vecLbar$ directional derivative of $\soluClassical$ is a.e. less than the $\vecLbar$ derivative of $\soluWeak$, it follows that $\soluClassical$ and $\soluWeak$ differ at every point in this region, which proves point \eqref{pt:weakDoesntExtendClassical}. 

Since $\soluBurgersClassical$ and $\soluBurgersWeak$ agree on $\soluDomainAgree$ (the region beneath $\CauchyHorizon$ and $\shockwave$), it follows that $\soluClassical$ and $\soluWeak$ agree in this domain. The global hyperbolicity of $\soluDomainAgree$ follows from a similar argument to the global hyperbolicity of $\soluDomainClassical$ in the proof of Theorem \ref{thm:main:MGHD}, that is that all causal curves in the region must have bounded slope and hence intersect $t=0$, making $t=0$ a Cauchy hypersurface, and hence $\soluDomainAgree$ globally hyperbolic (although not maximally globally hyperbolic). This proves point \eqref{pt:weakSolution:characterisationOfDomain}. 

For the proof of point \eqref{pt:weakSolution:shockDevelopment} of this theorem, consider the conditions in Definition \ref{D:SHOCKDEVELOPMENT} for the shock development problem. Set $\soluDomainSDP := \big(\R^{1+1}\big) \setminus \soluDomainAgree$ and $(\soluBurgersSDP,\soluSDP) := \big(\soluBurgersWeak|_{\soluDomainSDP},\soluWeak|_{\soluDomainSDP}\big)$. Consider the curves $x=2t$ and $x=\xSingularBoundary(t)$ $= \xSingularBoundaryFromData(\sqrt{t-1})$ for $t\geq 1$, with $\xSingularBoundaryFromData$ from equation \eqref{E:XCOORDINATEOFSINGULARSETFROMDATA}. Both curves start at $t=1$ and $x=2$. By direct computation, $\di \xSingularBoundary/\di t$ $=2-\arctan\sqrt{t-1}$. Since $\di \xSingularBoundary/\di t(1)=1$ and, for $t>1$, $\di \xSingularBoundary/\di t(t)<1$, it follows that the line segment $\shockwave$ given by $x=2t$ initially coincides with and is tangent to the singular boundary $\singularBoundary$ given by the curve $x=\xSingularBoundaryFromData(\sqrt{t})$, but that the singular boundary $x=\xSingularBoundaryFromData(\sqrt{t})$ lies to the left, and hence above, the shockwave $\shockwave$ for $t>1$.  This proves points \eqref{pt:SDP:shockwaveInPastOfSingularBoundary}, \eqref{pt:SDP:emanateFromInitialSingularity}, and \eqref{pt:SDP:asymptoticallyNull} from the definition of the shock development problem. Consider now the two characteristics for the Burgers' equation launched from $(t,x)=(0,\pm x_0)$. These are given respectively by $x=(2\mp\arctan(x_0))t\pm x_0$ and intersect at $t=x_0/\arctan(x_0)$, where $x=2x_0/\arctan(x_0)$ $=2t$. Thus, at each point on $\shockwave$, the characteristics of $\soluBurgersWeak$ arriving at a given $t$ arise from $x_0$ and $-x_0$ for some value of $x_0$. In particular, the mean of the slopes is $\frac12((2+\arctan(x_0))+(2-\arctan(x_0)))$ $=2$, which,  given the parametrization $\shockwave = \{ (t,k(t) = 2t) \, : \, t > 1\}$ of the shock, means  the Rankine-Hugoniot condition \eqref{E:RANKINEHUGONIOT} hold. Since the weak and classical solutions agree on the Cauchy horizon $\CauchyHorizon$, we have now proved point \eqref{pt:SDP:RankineHugoniot} in the definition of the shock development problem. To prove the geometric determinism condition, from the explicit parametrization of $\shockwave$, it follows that $T:= \partial_t + 2 \partial_x$ spans its tangent space at any point. Hence, using the explicit form of the metric \eqref{EqAcMetric}, we compute $\gfour(T,T) = \gfour_{\alpha\beta} T^\alpha T^\beta = - \frac{16 \soluBurgers}{(4+\soluBurgers)^2}$. Now, within the MGHD $\soluDomainClassical$, the characteristics $(t,x_0 + t(2+\dataBurgers(x_0)))$ intersecting $\shockwave$ are those emanating from $x_0 > 0$. Since $\soluBurgersClassical$ is constant on these characteristics, and $\dataBurgers(x_0) = - \arctan(x_0) < 0$ for $x_0>0$, we have that $\gfour(\soluBurgersClassical)_{\alpha\beta} T^\alpha T^\beta > 0$ everywhere on $\shockwave$ and hence it is $\gfour(\soluBurgersClassical)$-spacelike. Contrastingly, since $\dataBurgers(x_0) = - \arctan(x_0) >0$ for $x_0<0$, $\soluBurgersSDP=\soluBurgersWeak|_{\soluDomainSDP}$, and $\soluBurgersWeak$ is defined on $\soluDomainSDP$ as the constant values from characteristics emanating from $x_0 < 0$ (see \eqref{E:DEFOFSOLUBURGERSWEAK}), by continuously extending these to $\shockwave$ from the left, it follows that $\gfour(\soluBurgersSDP)_{\alpha\beta} T^\alpha T^\beta < 0$. This proves that $\shockwave$ is $\gfour(\soluBurgersSDP)$-timelike, which concludes the proof of the geometric determinism condition part \eqref{pt:SDP:supersonic}  of the shock development problem in Definition \ref{D:SHOCKDEVELOPMENT} and point \eqref{pt:weakSolution:shockDevelopment} of Theorem \ref{T:UNIQUEWEAKSOLUTIONS}.

Trivially from the discussion of the geometric determinism condition, based on the signs of  $\soluBurgersClassical$ and $\soluBurgersSDP$ restricted to $\shockwave$ (from the left in the latter), the geometric determinism condition is equivalent to the slope $k'$ of the shockwave $\shockwave$ lying between the slopes of the left and right characteristics reaching the shockwave, which is exactly the Lax entropy condition \eqref{E:LAXENTROPY}; this proves point \eqref{pt:weakSolution:geometricDeterminism} of Theorem \ref{T:UNIQUEWEAKSOLUTIONS}.

Finally, from from equations \eqref{eq:partialxSoluWaveExplicit}-\eqref{eq:partialtSoluWaveExplicit}, we find $\soluWeak$ is $C^1$ at the Cauchy Horizon $\CauchyHorizon$, but, from equation \eqref{eq:partialxSoluWaveExplicit:roughness}, $\soluWeak$ cannot be better than $C^{1,1/2}$, which completes the proof of point \eqref{pt:weakSolution:weakSingularityAcrossCauchyHorizon} and hence the theorem.
\end{proof}

\subsection{Lorentzian causality in the MGHD and causal bubbles} \label{SS:CAUSALBUBBLES}

This section studies the Lorentzian causality of the closure $\textnormal{cl}(\Omega_C) = \Omega_C \cup \mathcal{B} \cup \mathcal{S} \cup \underline{\mathcal{C}}$ of the MGHD. Recall that by Theorem \ref{thm:main:MGHD} the acoustical metric $\gfour$ is smooth in $\Omega_C$ and extends continuously to $\textnormal{cl}(\Omega_C)$ -- by slight abuse of notation we denote this continuous extension again by $\gfour$. We then consider $(\textnormal{cl}(\Omega_C), \gfour)$, which may be viewed as a Lorentzian manifold with piecewise $C^1$ boundary and continuous metric.

First, we lay out the definitions of causal and timelike past of a point. Recall from Appendix \ref{SecApp} that the set of timelike vectors at any point of $\textnormal{cl}(\Omega_C)$ forms a disconnected double cone. We call the component whose closure contains $L$ and $\underline{L}$ the future and the other one the past. Non-zero causal vectors lying in the closure of the future component are called \textbf{future directed causal}. The terms $\textbf{past directed causal/timelike}$ etc.\ are defined analogously. A piecewise $C^1$-regular curve $\gamma : I \to \textnormal{cl}(\Omega_C)$ is called past directed timelike/causal if $\dot{\gamma}(s)$ is past directed timelike/causal for all $s \in I$.
We can now define for $p \in \textnormal{cl}(\Omega_C)$ the \textbf{timelike (causal) past of $p$}, denoted by $I^-(p, \textnormal{cl}(\Omega_C))$ ($J^-(p, \textnormal{cl}(\Omega_C)$), to be the set consisting of all points $q \in \textnormal{cl}(\Omega_C)$ such that there exists a past directed timelike (causal) piecewise $C^1$-curve in $\textnormal{cl}(\Omega_C)$ from $p$ to $q$. The timelike (causal) future may be defined analogously, but is not needed here.

An interesting phenomenon present in the closure $(\textnormal{cl}(\Omega_C), \gfour)$ of our MGHD is the \emph{dynamical formation} of a `causal bubble'. This concept was introduced in \cite{ChruscielGrant} and may be defined here as the existence of a point $p \in \textnormal{cl}(\Omega_C)$ for which $J^-(p, \textnormal{cl}(\Omega_C)) \setminus I^-(p, \textnormal{cl}(\Omega_C))$ has non-empty interior. Such behaviour can only happen for Lorentzian metrics below Lipschitz regularity (see \cite{ChruscielGrant}) and indeed does happen in our example at the singular boundary $\mathcal{B}$, see Figure\,\ref{F:CAUSALBUBBLE}:
for any point $p \in \mathcal{B}$ we may consider the integral curves of $-L = -\partial_t - (2+\soluBurgersClassical) \partial_x $ through $p$. They correspond to past directed null curves. Note, however, that there is not a unique such curve. For example, one may enter $\Omega_C$ directly and follow the backwards characteristic arising from Burgers' equation with slope $-(2+\soluBurgersClassical)$ in $\Omega_C$. On the other hand one may also first follow the singular boundary $\singularBoundary$ up to $\mathcal{S}$ and only then enter $\Omega_C$ to follow the backwards characteristic arising from Burgers' equation with slope $-(2+\soluBurgersClassical)$ in $\Omega_C$. The causal past $J^-(p, \textnormal{cl}(\Omega_C))$ is bounded to the left by this latter curve and to the right by the (unique) integral curve of $-\underline{L}$ to the past of $p$.


Points in the timelike past $I^-(p, \textnormal{cl}(\Omega_C))$ on the other hand have to be connected to $p$ by a timelike curve. This rules out that this curve may trace out part of the singular boundary $\mathcal{B}$. Thus, $I^-(p, \textnormal{cl}(\Omega_C))$ is bounded to the left by the backwards characteristic arising from Burgers' equation with slope $-(2+\soluBurgersClassical)$ in $\Omega_C$; the first of the integral curves of $-L$ discussed above. To the right, it is bounded by the same integral curve of $-\underline{L}$ to the past. This shows that a causal bubble is present at every $p \in \mathcal{B}$ and proves Corollary \ref{cor:causalBubbles}. We also refer the reader to \cite{Christodoulou:shockDevelopment,AbbresciaSpeck} for an analogous phenomenon in the study of the Euler equations.

Finally, we discuss another interesting property of the singular boundary which amounts to an alternative perspective on causal bubbles. Clearly, with respect to the intrinsic geometry, $\mathcal{B}$ is null. By `intrinsic geometry' we just mean the restriction of $\gfour$ to $\mathcal{B}$. On the other hand, if we only consider the exterior geometry, i.e., $(\Omega_C, \gfour)$, then the geometry in the vicinity of $\mathcal{B}$ would suggest, by analogy with smooth Lorentzian geometry, that $\mathcal{B}$ is spacelike. Let us explain this point in more detail: consider a smooth Lorentzian manifold and pick a smooth null hypersurface and smooth spacelike one. For sake of concreteness say we consider the hypersurfaces $\{t=x\}$ and $\{t=0\}$ in $1+1$-dimensional Minkowski spacetime, see Figures \ref{FigNull} and \ref{FigSpacelike}.
\begin{figure}[h]
\centering
\begin{minipage}{.5\textwidth}
  \centering
 \def\svgwidth{2.8cm}
\begingroup%
  \makeatletter%
  \providecommand\color[2][]{%
    \errmessage{(Inkscape) Color is used for the text in Inkscape, but the package 'color.sty' is not loaded}%
    \renewcommand\color[2][]{}%
  }%
  \providecommand\transparent[1]{%
    \errmessage{(Inkscape) Transparency is used (non-zero) for the text in Inkscape, but the package 'transparent.sty' is not loaded}%
    \renewcommand\transparent[1]{}%
  }%
  \providecommand\rotatebox[2]{#2}%
  \newcommand*\fsize{\dimexpr\f@size pt\relax}%
  \newcommand*\lineheight[1]{\fontsize{\fsize}{#1\fsize}\selectfont}%
  \ifx\svgwidth\undefined%
    \setlength{\unitlength}{131.63146853bp}%
    \ifx\svgscale\undefined%
      \relax%
    \else%
      \setlength{\unitlength}{\unitlength * \real{\svgscale}}%
    \fi%
  \else%
    \setlength{\unitlength}{\svgwidth}%
  \fi%
  \global\let\svgwidth\undefined%
  \global\let\svgscale\undefined%
  \makeatother%
  \begin{picture}(1,0.99999992)%
    \lineheight{1}%
    \setlength\tabcolsep{0pt}%
    \put(0,0){\includegraphics[width=\unitlength,page=1]{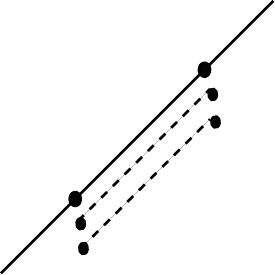}}%
    \put(0.18407108,0.35457221){\color[rgb]{0.01568627,0.01568627,0.01568627}\makebox(0,0)[lt]{\lineheight{1.25}\smash{\begin{tabular}[t]{l}$q$\end{tabular}}}}%
    \put(0.65545695,0.82595832){\color[rgb]{0.01568627,0.01568627,0.01568627}\makebox(0,0)[lt]{\lineheight{1.25}\smash{\begin{tabular}[t]{l}$p$\end{tabular}}}}%
  \end{picture}%
\endgroup%

      \caption{The hypersurface $\{t = x\}$ in $\Reals^{1+1}$} \label{FigNull}
\end{minipage}%
\begin{minipage}{.5\textwidth}
  \centering
  \def\svgwidth{5cm}
\begingroup%
  \makeatletter%
  \providecommand\color[2][]{%
    \errmessage{(Inkscape) Color is used for the text in Inkscape, but the package 'color.sty' is not loaded}%
    \renewcommand\color[2][]{}%
  }%
  \providecommand\transparent[1]{%
    \errmessage{(Inkscape) Transparency is used (non-zero) for the text in Inkscape, but the package 'transparent.sty' is not loaded}%
    \renewcommand\transparent[1]{}%
  }%
  \providecommand\rotatebox[2]{#2}%
  \newcommand*\fsize{\dimexpr\f@size pt\relax}%
  \newcommand*\lineheight[1]{\fontsize{\fsize}{#1\fsize}\selectfont}%
  \ifx\svgwidth\undefined%
    \setlength{\unitlength}{261.42858791bp}%
    \ifx\svgscale\undefined%
      \relax%
    \else%
      \setlength{\unitlength}{\unitlength * \real{\svgscale}}%
    \fi%
  \else%
    \setlength{\unitlength}{\svgwidth}%
  \fi%
  \global\let\svgwidth\undefined%
  \global\let\svgscale\undefined%
  \makeatother%
  \begin{picture}(1,0.59011487)%
    \lineheight{1}%
    \setlength\tabcolsep{0pt}%
    \put(0,0){\includegraphics[width=\unitlength,page=1]{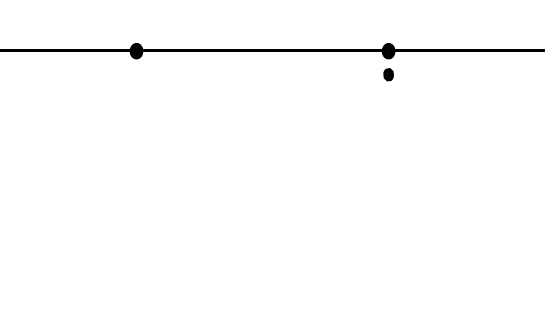}}%
    \put(0.24201742,0.54366462){\color[rgb]{0.01568627,0.01568627,0.01568627}\makebox(0,0)[lt]{\lineheight{1.25}\smash{\begin{tabular}[t]{l}$q$\end{tabular}}}}%
    \put(0.70922241,0.53933878){\color[rgb]{0.01568627,0.01568627,0.01568627}\makebox(0,0)[lt]{\lineheight{1.25}\smash{\begin{tabular}[t]{l}$p$\end{tabular}}}}%
    \put(0,0){\includegraphics[width=\unitlength,page=2]{SpacelikeBdry.pdf}}%
  \end{picture}%
\endgroup%

      \caption{The hypersurface $\{t = 0\}$ in $\Reals^{1+1}$} \label{FigSpacelike}
\end{minipage}
\end{figure}
First consider $\{t=x\}$ with two points $p$ and $q$ such that $p$ lies to the causal future of $q$. If we now only consider the exterior geometry to the past of $\{t = x\}$, then it has the property that if we shoot back a  left-going null geodesic from any point \emph{near} $p$ that it reaches points \emph{near} $q$, cf.\ Figure \ref{FigNull}. In the limit, these null geodesics exactly converge to $\{t = x\}$. This is in stark contrast to the situation for the spacelike hypersurface $\{t=0\}$: considering two points $p$ and $q$ on $\{t=0\}$ and a sequence of points $p_n$ converging to $p$ from the past, we can again shoot back left- (and right-) going null geodesics from $p_n$. However, this time these null geodesics stay uniformly bounded away from $q$. 

Now, going back to our exterior geometry $(\Omega_C, \gfour)$ and considering a sequence of points $p_n \in \Omega_C$ which converges to a point $p \in \mathcal{B}$, we see that the past directed null geodesics emanating from $p_n$ do not stay close to $\mathcal{B}$ as in our model case of a null hypersurface, Figure \ref{FigNull}, but veer off in the same manner as in Figure \ref{FigSpacelike}. It is in this way that the exterior geometry of  $\mathcal{B}$ suggests that $\mathcal{B}$ is spacelike.

\begin{center}
	\begin{figure}[h]  
      	\begin{overpic}[scale=1.2, grid = false, tics=5, trim=-.5cm 0cm -1cm -.5cm, clip]{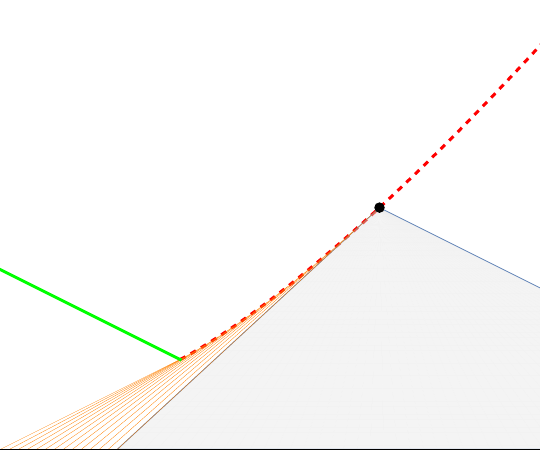}
       \end{overpic}
       \caption{The singular boundary is the dashed line, the Cauchy horizon the green one. The timelike past $I^-(p, \textnormal{cl}(\Omega_C))$ of an arbitrary point on the singular boundary is given by the shaded region in grey. The causal bubble $J^-(p, \textnormal{cl}(\Omega_C)) \setminus I^-(p, \textnormal{cl}(\Omega_C))$ is given by the region shaded by the characteristics corresponding to $L$ in orange.}
       \label{F:CAUSALBUBBLE}
    \end{figure}
\end{center}

\appendix

\section{Lorentzian geometry and globally hyperbolic developments} \label{SecApp}

This appendix presents the basic notions of Lorentzian geometry and the concept of a globally hyperbolic development in the context of the problem studied in this paper. The reader interested in how the geometric concepts generalise to abstract Lorentzian manifolds in full generality is referred to \cite{ONeill}. For the concept of a globally hyperbolic development of a general scalar quasilinear wave equation we refer the reader to \cite{EperonReallSbierski}.

We consider $\mathbb{R}^2$  and let $\Omega \subseteq \mathbb{R}^2$ be open. A \textbf{Lorentzian metric} $g$ on $
\Omega$ is a symmetric $2$-covariant tensor field on $\Omega$ with signature $(-1,+1)$. Paraphrasing this definition, for each $x \in \Omega$, $g_x \in \mathbb{R}^{2 \times 2}$ is a symmetric bilinear form on $\mathbb{R}^2$ with one negative and one positive eigenvalue. If the map $\Omega \ni x \mapsto g_x \in \mathbb{R}^{2 \times 2}$ is continuous (smooth, etc.\ ), the Lorentzian metric $g$ is said to be continuous (smooth, etc.\ ).  The pair $(\Omega, g)$ is an example of a Lorentzian manifold.

As an example, $\gfour$ as defined in \eqref{EqAcMetric} is a symmetric $2$-covariant tensor field. Its determinant can be computed to be $\det \gfour = -\frac{4}{(2+ \Psi)^2}$, which, for $\Psi \neq -2$ is negative. Thus, $\gfour$ is non-degenerate with one positive and one negative eigenvalue -- hence, it is a Lorentzian metric. Its regularity depends on the regularity of $\Psi$. If $\Psi$ is smooth, the acoustical metric is also smooth.

Given a vector $\ X \in T_x\Omega \simeq \mathbb{R}^2$ at a point $x \in \Omega$, we say that
\begin{equation*}
    X \textnormal{ is } \begin{cases}
        \textnormal{\textbf{timelike}} &\iff g_x(X,X) <0 \\
        \textnormal{\textbf{spacelike}} &\iff g_x(X,X) >0 \\
        \textnormal{\textbf{null} (or \textbf{characteristic}) } &\iff g_x(X,X) =0 \;.\\
    \end{cases}
\end{equation*}
Moreover, a vector is said to be \textbf{causal} if it is timelike or null. The set of timelike vectors at a given point $x \in \Omega$ forms a disconnected double cone. 

Consider a $C^1$-regular curve $\gamma : I \to \Omega$, where the interval $I \subseteq \Reals$ is of the form $(a,b)$, $[a,b)$, $(a,b]$, or $[a,b]$ with $-\infty \leq a < b \leq \infty$. We say that $\gamma$ is \textbf{extendible} (in $\Omega$) if at least one of the limits $\lim_{s \to a} \gamma(s)$ or $\lim_{s \to b}\gamma(s)$ exists \underline{in $\Omega$}. Let us emphasise that the notion of extendibility is with respect to the domain $\Omega$! If  a curve is not extendible in $\Omega$, then we say it is \textbf{inextendible} (in $\Omega$). Moreover, a $C^1$-regular curve $\gamma : I \to \Omega$ is called \textbf{timelike/null/spacelike/causal} if the vector $\dot{\gamma}(s)$ is timelike/null/spacelike/causal for all $s \in I$ with respect to the Lorentzian metric $g$ on $\Omega$.

Next, consider an open set $\Omega \subseteq \Reals^2$ which contains our initial data hypersurface $\Sigma = \{t=0\}$, together with a smooth Lorentzian metric $g$ on $\Omega$. We say that $\Sigma$ is a \textbf{Cauchy hypersurface} for $(\Omega, g)$, if every inextendible timelike $C^1$-curve in $\Omega$ intersects $\Sigma$ exactly once.  In this case, one also uses the terminology that $(\Omega, g)$ is \textbf{globally hyperbolic} with Cauchy hypersurface $\Sigma$.

So far, our definitions were purely geometric. We now make the connection with the Cauchy problem for the quasilinear wave equation \eqref{E:QNLW:INTRO}, \eqref{E:QNLW:DATA:INTRO}. A classical (smooth) solution $\Phi : \Sigma \subseteq \Omega \to \Reals$ of \eqref{E:QNLW:INTRO} which attains the initial data \eqref{E:QNLW:DATA:INTRO} is called a \textbf{development} of the initial data. Now, a development is called a \textbf{globally hyperbolic development} (GHD), if $\Omega$ together with the acoustical metric $g$, defined by \eqref{EqAcMetric}, is globally hyperbolic with Cauchy hypersurface $\Sigma$. And finally, a \textbf{maximal globally hyperbolic development} (MGHD) is a GHD $(\Omega, \Phi)$ such that there is no other GHD $(\Omega', \Phi')$ with $\Omega \subsetneq \Omega'$ and $\Phi'|_\Omega = \Phi$. In other words, there is no bigger GHD which extends $(\Omega, \Phi)$.

\section{Open problems and perspective}
\label{S:PROBLEMSANDPERSPECTIVE}
In this paper, we have studied various stable phenomena tied to shock formation, starting from smooth initial conditions, for the model quasilinear wave equation \eqref{E:QNLW:INTRO} in $1$ space dimension. There is a wealth of fascinating open problems connected to our work, and here we highlight some of the most compelling ones.

\begin{itemize}
    \item (\textbf{Other solution regimes for the model problem}) With very minor additional effort, the results we have presented for the model equation's Cauchy problem \eqref{E:QNLW:INTRO}--\eqref{E:QNLW:DATA:INTRO} hold for open sets of initial data which are small perturbations of \eqref{E:QNLW:DATA:INTRO}. But, what happens for other smooth initial data? In particular, are other kinds of singularities other than shocks possible? Are there any nontrivial global solutions? 
    \item (\textbf{Nonuniqueness of MGHDs for the model problem}) Theorem \ref{thm:main:MGHD} proves that the MGHD of the data \eqref{E:QNLW:DATA:INTRO} is unique, thanks to the boundary satisfying the global geometric one-sided condition of \cite{EperonReallSbierski} (see Theorem \ref{thm:main:MGHD} point \ref{pt:classicalProperties:MGHDuniqueness} and its proof in Sect. \ref{ss:proofOfThmMGHD}). Does there exist any smooth initial data that leads to non-unique MGHDs? Such MGHDs must have a boundary that is "bad" in some sense. If they do exist, are these unstable in the sense that, if you perturb the data, do you recover uniqueness?
    
    \item (\textbf{Structure of the boundary of MGHDs for the model problem})
    More generally, what can one say about the boundary of an MGHD arising from smooth generic initial data for the model problem? What kind of regularity can it have? Does it even have to be rectifiable? One might keep in mind the results of \cite{Kommemi}, which shows that for the Einstein--Maxwell--Klein--Gordon system in spherical symmetry, which is a quasilinear $1+1$-dimensional hyperbolic system, the boundary of the MGHD can in principle have many different kinds of components. 
    \item  (\textbf{Sensitivity to nonlinearities}) How sensitive are the results to perturbing the structure of the nonlinearities? 
        One should keep in mind that for Riccati's ODE $\dot{y} = y^2$, perturbing it by a cubic term to obtain
        $\dot{y} = y^2 + \epsilon y^3$ can dramatically alter the behavior of solutions, by either enhancing the blowup-rate or preventing the singularity altogether, depending on the sign of $\epsilon$. Similarly, one might also keep in mind the
        inhomogeneous Burgers equation $\partial_t \Psi + \Psi \partial_x \Psi = \Psi^2$ in one spatial dimension. For this equation, one can construct open sets of 
        smooth initial data in which $\partial_x \Psi|_{t=0}$ is large but $\Psi|_{t=0}$ itself is small in $L^{\infty}$, such that a shock forms in finite time, much 
        like in the homogeneous case. For the same equation, one can also construct different sets of smooth initial data in which 
        $\Psi|_{t=0}$ itself is large but $\partial_x \Psi|_{t=0}$ is small in $L^{\infty}$, such that the solution $\Psi$ itself blows up in finite time, much like what happens to solutions to Riccati's ODE $\dot{y} = y^2$ for positive times when $y(0) > 0$.
    \item (\textbf{Global uniqueness of weak solutions}) The global weak solution we constructed in Theorem \ref{T:UNIQUEWEAKSOLUTIONS} is unique within the class of Oleinik entropy solutions. Are there other criteria one can impose to guarantee uniqueness (ideally criteria that might be relevant for related higher dimensional problems)?
    \item  (\textbf{The compressible Euler equations in $\mathbb{R}^{1+1}$}) 
        Do similar MGHD/global weak solution results hold for the isentropic compressible Euler equations in $(1+1)$-dimensions? We conjecture that they do, but the proofs will not be identical. Specifically, for our model problem written as a system of two transport equations in transverse characteristic directions \eqref{E:FirstOrderBurgers}--\eqref{E:FirstOrderWavePartshockdevelopmentp}, the characteristic direction $\vecLbar = \partial_t - 2\partial_x$ is solution-independent. In $(1+1)$-dimensions, the isentropic compressible Euler equations can also be written as a similar system $\Lunit_+ \mathcal{R}_+ = 0$ $\uLunit_- \mathcal{R}_- = 0$,  where $\mathcal{R}_\pm$ are the famous Riemann invariants and $\Lunit_\pm$ are the characteristic directions and are \emph{both} solution dependent. This question is significantly harder in $(1+3)$-dimensions because there is a balancing act between dispersion (which makes characteristics want to spread out) and shock formation (which makes characteristics want to collapse with infinite density).

    \item (\textbf{Global weak solutions in multi-dimensions}) The only multi-dimensional result for
        global, weak, shock-containing solutions to 
        a quasilinear hyperbolic PDE without symmetry assumptions
        is the recent paper \cite{ginsberg2024stability} by Ginsberg--Rodnianski, which concerns quasilinear wave equations in $3$ space dimensions. The initial data treated in \cite{ginsberg2024stability} are (generally asymmetric) perturbations of spherically symmetric initial data of the isentropic Euler equations containing two shocks, and the resulting weak solutions are piecewise smooth in the regions between two shock hypersurfaces. 
        This result can be viewed as an global version of Majda \cite{Majda}/ Majda-Tohmann's \cite{MajdaThomann} celebrated local existence results for weak solutions arising from piecewise smooth data. However, we note that \cite{ginsberg2024stability} does not treat the full Euler system as it does not account for the physical jump in entropy and vorticity across the shocks. It is of prime interest to extend this result to the full Euler system, other hyperbolic equations, and to understand the global structure of weak solutions.

    \item (\textbf{Structure of the shock in multi-dimensions}) 
     The shock curve in the model problem from Theorem \ref{T:UNIQUEWEAKSOLUTIONS} is smooth, and the weak solution has smooth one-sided limits, see Fig. \ref{fig:BurgersBlowUp}. This is also the case for the multi-dimensional shock hypersurfaces constructed in \cite{Majda, MajdaThomann,ginsberg2024stability}.  However, in multi-$D$, this is not always the case. It has been known since the seminal work of mathematical physicist Guderley \cite{guderley1942starke} that solutions to the compressible Euler equations with a radial shock could collapse into an implosion singularity, where the undifferentiated solution variables blow-up. These have now been rigorously proved to exist in the recent work of \cite{jang2025self}. 
     
     On the other hand, upcoming work of Anderson-Angelopoulos-Chaturvedi proves that a weak solution to the $2D$-Bugers equation could develop infinite gradients, along one side of an \emph{already existing} shock hypersurface while being one-sided smooth on the other side of the shock. 

     Can one fully classify and rigorously construct the possible structures, degeneracies, and singularities that can develop on the shock?

    \item (\textbf{Global structure of MGHD in multi-dimensions}) To date, the only multi-dimensional results with explicit constructions of the MGHD of shock-forming data is \cite{AbbresciaSpeck,AbbresciaSpeck2,shkoller2024geometry}. However, as we stated in Sect. \ref{SSS:HISTORICALCONTEXTANDSUBTLETIESOFMGHDS}, these results only describe a \emph{local} pre-compact portion of the MGHD. In particular, the global geometric condition which is sufficient for uniqueness from \cite{EperonReallSbierski} cannot be deduced by the results of \cite{AbbresciaSpeck,AbbresciaSpeck2,shkoller2024geometry}. In particular, one could imagine that shock-forming data develops the (part) of an MGHD described by those papers in one region of spacetime, while a singularity of a dramatically different nature forms (such as an implosion, where the density itself is infinite) far away.

    Indeed, the recent results of \cite{merle2022implosion,merle2022implosion2} construct imploding solutions to the compressible Euler equations in $\mathbb{R}^{1+3}$ launched from  $C^\infty$ data. In particular, for those results, the location of the implosion would also be part of the boundary of the MGHD, but the acoustical metric would not extend continuously to it. 
    
    On the other hand, \cite{AbbresciaSpeck} proves that the singular boundary $\singularBoundary$ for non-degenerate shock forming data is an embedded $C^{1,1/2}$ hypersurface in $\R^{1+3}$, that this regularity is sharp, and that the acoustical metric extends as a $C^{0,1/2}$ Lorentzian metric to it. Is there initial data for which one could construct a singular boundary of lower regularity, or even worse, is only  rectifiable? 
    
    \item (\textbf{Inviscid limits}) There is a long history of works that investigate the vanishing-viscosity limit (VVL) of solutions to quasilinear hyperbolic conservation laws, especially for weak solutions in $1D$ that are allowed to have shocks. The basic hope is that the VVL of solutions to a family of a viscous models
    should yield the entropy solution to the inviscid equation in the limit, thereby helping to justify the
    physical relevance of the entropy solutions; see e.g. the classic papers
    \cite{GoodmanXin,Wang,BianchiniBressan} and book \cite{cDafermos}, which address the convergence rate in the norm $\| \cdot \|_{L^1}$ for general conservation laws $\partial_t \vec{u} + \partial_x (\vec{F}(\vec{u})) = \nu \vec{u}_{xx}$. This left open the following three problems: \textbf{I)} can you quantify the convergence in the VVL \emph{at the point of shock formation}? This is difficult because the viscosity regularizes the singularity, and once has to recover it in the limit; \textbf{II)} can you quantify the convergence in the VVL as a discontinuous shock emerges from the first point of gradient catastrophe as in the shock development problem?; \textbf{III)} can you quantify the convergence in the VVL in \textbf{I)}--\textbf{II)} in \emph{pointwise} norms? Regarding point \textbf{II)}, we clarify that \cite{GoodmanXin,Wang} worked in the setting where the discontinuous shock fronts were already present in the data assumptions and so they did not have to quantify how the shock initially developed. The seminal work of \cite{BianchiniBressan} dealt with arbitrary data which is small in BV.
    
    In a series of recent works, Anderson-Chaturvedi-Graham \cite{anderson2025shock,anderson2025shock2}  made spectacular progress in extending our understanding of how viscous solutions developed from shock-forming plane-symmetric data behave \emph{pointwise} in the VVL. In particular, for \emph{physical}\footnote{Many physically diffusive systems often do not feature viscosity in all equations as in $\partial_t \vec{u} + \partial_x (\vec{F}(\vec{u})) = \nu \vec{u}_{xx}$. For example, for the compressible Navier-Stokes equations, physical viscosity vanishes in the mass equation and dissipates the entropy density} viscous perturbations of hyperbolic conservation laws and small perturbations of plane-symmetric non-degenerate smooth shock-forming data, they proved that the viscous solution (which is regular) converges in $L^\infty$ to the inviscid shock-forming solution at a rate of $\nu^{1/4}$ \emph{up to and including the time of shock formation}. This is essentially a complete resolution of \textbf{I)} for this class of data. 

    This leaves the following interesting open problems: can you extend \cite{anderson2025shock,anderson2025shock2} to the shock development problem? Is there a way to describe the VVL up to the boundary of the MGHD for the inviscid solution? Note that, being diffusive PDEs, the viscous solutions do not have a natural Lorentzian structure associated to them as the inviscid equations, being hyperbolic, do! Is there a way to generalize \textbf{I)}--\textbf{II)} to higher dimensions?

\end{itemize}

\bibliography{QLWWeakSolutionShockInteruptingMGHD}
\bibliographystyle{alpha}


\printindex

\end{document}